\documentclass{icmart}
\usepackage{odonnell-icm-style}

\newtheorem{theorem}{Theorem}[section]
\newtheorem{corollary}[theorem]{Corollary}

\newtheorem{proposition}[theorem]{Proposition}

\newtheorem{fact}[theorem]{Fact}
\newtheorem{question}[theorem]{Question}

 %%%% for unnumbered statements

\theoremstyle{definition}
\newtheorem{definition}[theorem]{Definition}
\newtheorem{remark}[theorem]{Remark}
\newtheorem{example}[theorem]{Example}

\newcommand{\thetagw}{\theta_{\mathrm{GW}}}

\contact[odonnell@cs.cmu.edu]{Computer Science Department, Carnegie Mellon University, Pittsburgh, USA}

\title[Social choice, computational complexity, Gaussian geometry, and Boolean functions]{Social choice, computational complexity, Gaussian geometry, and Boolean functions}

\author[Ryan O'Donnell]
{Ryan O'Donnell\thanks{Supported by NSF grants CCF-1319743 and CCF-1116594.}}

\begin{document}

\begin{abstract}
   We describe a web of connections between the following topics: the mathematical theory of voting and social choice; the computational complexity of the Maximum Cut problem; the Gaussian Isoperimetric Inequality and Borell's generalization thereof; the Hypercontractive Inequality of Bonami; and, the analysis of Boolean functions.  A major theme is the technique of reducing inequalities about Gaussian functions to inequalities about Boolean functions $f \btb$, and then  using induction on~$n$ to further reduce to  inequalities about functions $f \co \bits \to \bits$.  We especially highlight De, Mossel, and Neeman's recent use of this technique to prove the Majority Is Stablest Theorem and Borell's Isoperimetric Inequality simultaneously.
\end{abstract}

\begin{classification}
Primary 68Q87; Secondary 94C10, 60G15.
\end{classification}

\begin{keywords}
Social choice, analysis of Boolean functions, Majority Is Stablest, Max-Cut, computational complexity, Gaussian geometry, isoperimetry, hypercontractivity.
\end{keywords}

\maketitle

\bigskip

(This survey gives only a sketch of various results, and is slightly imprecise in places.  For more details on the topics described herein, see~\cite{OD14}.)

\section{Social choice and Boolean functions}              \label{sec:arrow}

We begin by discussing a problem concerning voting. This will motivate for us certain definitions involving \emph{Boolean functions}; i.e.,~functions $f \btb$ (or  more generally, $f\btR$) whose domain consists of \emph{$n$-bit strings}.  Suppose we have an election with~$n$ voters and $2$~candidates, named~$-1$ and~$1$.  A \emph{voting rule} is simply any Boolean function $f \btb$, mapping the voters' votes to the winner of the election.  The \emph{majority rule} $\Maj_n \btb$, defined (for $n$~odd) by $\Maj_n(x) = \sgn(x_1 + x_2 + \cdots + x_n)$, is perhaps the most natural and mathematically elegant voting rule, but a variety of others are used in practice. Several countries (the US and the UK, for example) elect their head of state via a two-level (weighted-)majority scheme.  Other countries, unfortunately, have been known to use a \emph{dictator} rule: $f(x) = x_i$ for some dictator $i \in [n]$.  The mathematical field of \emph{social choice} is concerned with the properties of various voting rules; for a survey, see e.g.~\cite{BGR09}.

Let's now imagine a twist on the scenario: The $n$ voters decide on their votes, $x = (x_1, \dots, x_n) \in \bn$.
However, due to faulty voting machines, each vote is independently \emph{misrecorded} with probability~$\delta \in [0,1]$.  We denote the resulting list of votes by $\by \in \bn$, and call it a \emph{noisy copy} of the original votes~$x$.  We now ask: \emph{What is the probability that the noise affects the outcome of the election?  How does this probability depend on the voting rule~$f$?}  To answer this question we also need a probabilistic model for how the original votes are cast. We make the simplest possible assumption --- that they are uniformly random, denoted $\bx \sim \bn$.  In the social choice literature this is called the Impartial Culture Assumption~\cite{GK68}.  Let's introduce some mathematical notation for our scenario, using the more convenient parameter $\rho = 1-2\delta \in [-1,1]$:
\begin{definition}
    Given $x \in \bn$ and $\rho \in [-1,1]$, we say that the random vector $\by$ is a \emph{$\rho$-correlated copy} of~$x$ if each coordinate $\by_i$ is independently set to~$x_i$ with probability $\half(1+\rho)$ and set to $-x_i$ with probability $\half(1-\rho)$.  (For the more common case of $\rho \geq 0$, this is equivalent to setting $\by_i = x_i$ with probability~$\rho$ and making~$\by_i$ uniformly random with probability $1-\rho$.)  When $\bx \sim \bn$ is uniformly random and $\by$ is a $\rho$-correlated copy of~$\bx$, we call $(\bx, \by)$ a \emph{$\rho$-correlated random pair of strings}.  Note that this is actually symmetric in $\bx$ and $\by$; an alternative definition is that each pair $(\bx_i, \by_i) \in \bits^2$ is chosen independently with $\E[\bx_i] = \E[\by_i] = 0$ and $\E[\bx_i\by_i] = \rho$.
\end{definition}
\begin{definition}
    For $\rho \in [-1,1]$, the operator $\T_\rho$ acts on Boolean functions ${f \btR}$ via
    \[
        \T_\rho f(x) = \E_{\by \text{ a $\rho$-correlated copy of~$x$}}[f(\by)].
    \]
    We also define the \emph{noise stability of~$f$ at~$\rho$} to be
    \[
        \Stab_\rho[f] = \E_{\bx \sim \bn}[f(\bx) \cdot \T_\rho f(\bx)] = \E_{\substack{(\bx, \by) \textnormal{ $\rho$-correlated} \\ \textnormal{strings}}}[f(\bx)f(\by)].
    \]
    Note that in the special case $f \btb$,
    \[
        \Stab_\rho[f] = 1 - 2\Pr_{\substack{(\bx, \by) \textnormal{ $\rho$-correlated} \\ \textnormal{strings}}}[f(\bx) \neq f(\by)].
    \]
\end{definition}
Returning to the election scenario in which the voters' votes are misrecorded with probability~$\delta$, we see that the probability this affects the outcome of the election is precisely $\half - \half \Stab_{1-2\delta}[f]$.  Thus the voting rules that minimize this probability are precisely those which maximize the noise stability $\Stab_{1-2\delta}[f]$.

Let's focus on the more natural case of $0 < \rho < 1$, i.e., $0 < \delta < \half$.  It's obvious that the Boolean functions $f \btb$ that maximize $\Stab_\rho[f]$ are precisely the two constant functions $f(x) = \pm 1$.  These functions are highly unfair as voting rules, so it's natural to make an assumption that rules them out.  One common such assumption is that $f$ is \emph{unbiased}, meaning $\E[f(\bx)] = 0$; in other words, the two outcomes $\pm 1$ are equally likely when the voters vote uniformly at random.  A stronger, but still very natural, assumption is that~$f$ is \emph{odd}, meaning $f(-x) = -f(x)$. In the social literature this is called \emph{neutrality}, meaning that the voting rule is not affected by changing the names of the candidates.

We might now ask which \emph{unbiased} functions $f \btb$ maximize $\Stab_\rho[f]$.  This problem can be solved easily using \emph{Fourier analysis of Boolean functions}, the basic facts of which we now recall:
\begin{fact}
    Any $f \btR$ can be uniquely expressed as a multilinear polynomial,
    \[
        f(x) = \sum_{S \subseteq [n]} \wh{f}(S) \prod_{i \in S} x_i.
    \]
    This is called the \emph{Fourier expansion} of $f$, and the coefficients $\wh{f}(S) \in \R$ are called the \emph{Fourier coefficients} of~$f$.  We have \emph{Parseval's formula},
    \[
        \E_{\bx \sim \bn} [f(\bx) g(\bx)] = \sum_{S \subseteq [n]} \wh{f}(S)\wh{g}(S).
    \]
    In particular, if $f \btb$ then $\sum_S \wh{f}(S)^2 = 1$.
\end{fact}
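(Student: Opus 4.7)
The plan is to identify the $2^n$ monomials $\chi_S(x) = \prod_{i \in S} x_i$, indexed by $S \subseteq [n]$, as an orthonormal basis of the $2^n$-dimensional real inner product space of functions $\bn \to \R$ endowed with $\langle f, g\rangle = \E_{\bx \sim \bn}[f(\bx) g(\bx)]$. Once this is established the whole fact falls out of standard linear algebra: every $f$ admits a unique expansion in the basis, the coefficient $\wh{f}(S)$ is just $\langle f, \chi_S\rangle$, and Parseval's formula is the usual identity $\langle f, g\rangle = \sum_S \langle f, \chi_S\rangle \langle g, \chi_S\rangle$ for orthonormal bases.

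I would begin with orthonormality. For any $S, T \subseteq [n]$,
\[
    \langle \chi_S, \chi_T\rangle = \E_{\bx \sim \bn}\!\left[\prod_{i \in S \triangle T} \bx_i\right] = \prod_{i \in S \triangle T} \E[\bx_i],
\]
using that the coordinates of $\bx \sim \bn$ are independent. Since each $\E[\bx_i] = 0$, the product vanishes unless $S \triangle T = \emptyset$, i.e., $S = T$, in which case the empty product equals~$1$. Next I would establish spanning by an explicit construction: for each $a \in \bn$ the indicator $\mathbf{1}_{x=a}(x) = \prod_{i=1}^n \frac{1+a_i x_i}{2}$ is a multilinear polynomial equal to~$1$ at $x=a$ and $0$ elsewhere, so $f(x) = \sum_{a \in \bn} f(a)\,\mathbf{1}_{x=a}(x)$ expresses any $f$ as a multilinear polynomial in the $\chi_S$. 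Uniqueness of the expansion then follows from orthonormality (which implies linear independence) together with the dimension count: the $2^n$ monomials form a basis for the $2^n$-dimensional function space. The special case $f \btb$ is immediate: $f(\bx)^2 \equiv 1$ gives $\E[f(\bx)^2] = 1$, and Parseval applied with $g = f$ yields $\sum_S \wh{f}(S)^2 = 1$.

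There is no real obstacle here; the one subtlety worth flagging is that on $\bn$ polynomial equality is not the same as formal polynomial equality, since $x_i^2 = 1$ identically. Uniqueness therefore requires restricting to \emph{multilinear} representatives, and this is precisely what the orthonormality of the $\chi_S$ enforces.
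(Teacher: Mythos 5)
Your proof is correct and is exactly the standard argument (orthonormality of the characters $\chi_S$ under the inner product $\E[fg]$, spanning via the point indicators $\prod_i \tfrac{1+a_ix_i}{2}$, and Parseval as the usual orthonormal-basis identity); the paper states this Fact without proof, deferring to the standard treatment, and your write-up matches it. No gaps.
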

\begin{fact}                                    \label{fact:Trho-fourier}
    The Fourier expansion of $\T_\rho f$ is
    \[
        \T_\rho f(x) = \sum_{S \subseteq [n]} \rho^{|S|} \wh{f}(S) \prod_{i \in S} x_i
    \]
    and hence $\Stab_\rho[f] = \sum_{S} \rho^{|S|} \wh{f}(S)^2$.
\end{fact}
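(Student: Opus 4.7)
The plan is a direct computation, exploiting the multilinearity of the Fourier expansion together with the coordinate-wise independence of the noise.

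First I would substitute the Fourier expansion of $f$ into the definition of $\T_\rho f$ and swap the sum with the expectation over the noisy copy $\by$ of~$x$. This gives
\[
    \T_\rho f(x) = \sum_{S \subseteq [n]} \wh{f}(S)\, \E_{\by}\Bigl[\prod_{i \in S} \by_i\Bigr],
\]
so the task reduces to evaluating $\E_{\by}\!\bigl[\prod_{i \in S} \by_i\bigr]$ for each $S$. Because the coordinates $\by_i$ are independent given~$x$, this factors as $\prod_{i \in S} \E[\by_i \mid x_i]$, and a one-line calculation from the definition of a $\rho$-correlated copy yields $\E[\by_i \mid x_i] = \tfrac{1+\rho}{2} x_i + \tfrac{1-\rho}{2}(-x_i) = \rho\, x_i$. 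Multiplying these together produces $\rho^{|S|} \prod_{i \in S} x_i$, and substituting back gives the claimed Fourier formula for $\T_\rho f$. The uniqueness of the multilinear representation (from the cited Fact) confirms that the Fourier coefficients of $\T_\rho f$ are precisely $\rho^{|S|} \wh{f}(S)$.

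For the noise stability formula, I would apply Parseval's formula to the inner product $\E_{\bx}[f(\bx) \cdot \T_\rho f(\bx)]$, using $\wh{(\T_\rho f)}(S) = \rho^{|S|} \wh{f}(S)$ from the first part. This immediately yields $\Stab_\rho[f] = \sum_{S} \rho^{|S|} \wh{f}(S)^2$.

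There is no real obstacle here: the only step that requires any thought is the computation $\E[\prod_{i \in S} \by_i \mid x] = \rho^{|S|} \prod_{i \in S} x_i$, and even that is just independence plus a two-term expectation. The whole proof is essentially a bookkeeping exercise once one has the definition of $\rho$-correlation and Parseval in hand.
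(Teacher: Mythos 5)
Your proof is correct and is exactly the standard argument the paper has in mind when it states this Fact without proof: the key computation $\E[\by_i \mid x_i] = \rho x_i$ plus coordinate-wise independence gives the Fourier formula for $\T_\rho f$, and Parseval then yields the stability formula. Nothing to add.
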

Using these facts, the following is an exercise:
\begin{fact}                                        \label{fact:stab-max}
    Assume $0 < \rho < 1$. Then $\Stab_\rho[f] \leq \rho$ holds for all unbiased ${f \btb}$, with equality iff $f$ is a (possibly negated) dictator function, $f(x) = \pm x_i$. Furthermore, $\Stab_{-\rho}[f] \geq -\rho$ holds for \emph{all} ${f \btb}$, not necessarily unbiased, with the same equality conditions.
\end{fact}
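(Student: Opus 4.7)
My plan is to derive both inequalities directly from the Fourier formula $\Stab_\rho[f] = \sum_S \rho^{|S|} \wh{f}(S)^2$ given in Fact~\ref{fact:Trho-fourier}, combined with Parseval's identity $\sum_S \wh{f}(S)^2 = 1$ (valid since $f$ is $\pm 1$-valued).

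For the first assertion, I would begin by noting that $f$ unbiased means $\wh{f}(\emptyset) = \E[f] = 0$, so the sum $\Stab_\rho[f] = \sum_{|S| \geq 1} \rho^{|S|} \wh{f}(S)^2$ runs only over $|S| \geq 1$. Since $0 < \rho < 1$ we have $\rho^{|S|} \leq \rho$ for all such $S$, and Parseval gives
\[
    \Stab_\rho[f] \;\leq\; \rho \sum_{|S| \geq 1} \wh{f}(S)^2 \;=\; \rho.
\]
Equality forces $\wh{f}(S) = 0$ whenever $|S| \geq 2$, so $f(x) = \sum_{i=1}^n c_i x_i$ with $\sum_i c_i^2 = 1$. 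To conclude $f = \pm x_i$, I would argue that since $f$ is $\pm 1$-valued, for any coordinate $i$ the pair of evaluations at $x$ and at $x$ with $x_i$ flipped differ by exactly $2c_i \in \{-2,0,2\}$, forcing $c_i \in \{-1, 0, 1\}$; combined with $\sum c_i^2 = 1$ exactly one $c_i$ is nonzero and $f(x) = \pm x_i$.

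For the second assertion, I would expand
\[
    \Stab_{-\rho}[f] \;=\; \wh{f}(\emptyset)^2 + \sum_{|S| \geq 1} (-\rho)^{|S|} \wh{f}(S)^2
\]
and observe that $(-\rho)^{|S|} \geq -\rho$ for every $|S| \geq 1$: on odd levels this is $-\rho^{|S|} \geq -\rho$ since $\rho^{|S|} \leq \rho$, and on even levels it is nonnegative. Thus
\[
    \Stab_{-\rho}[f] \;\geq\; \wh{f}(\emptyset)^2 - \rho \sum_{|S| \geq 1} \wh{f}(S)^2 \;=\; -\rho + (1+\rho)\,\wh{f}(\emptyset)^2 \;\geq\; -\rho.
\]
Equality forces $\wh{f}(\emptyset) = 0$ (so $f$ is automatically unbiased) and $\wh{f}(S) = 0$ on every level $|S| \geq 2$ (where the inequality $(-\rho)^{|S|} \geq -\rho$ is strict), reducing to the same degree-$1$ situation as above and yielding $f = \pm x_i$.

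No step looks genuinely difficult: the only spot requiring a tiny structural argument (rather than a one-line Fourier bound) is the extraction of the dictator form from a degree-$1$ $\pm 1$-valued function, which I would handle via the coordinate-flip observation above. Both inequalities and their equality cases then follow in parallel from the same Fourier+Parseval setup.
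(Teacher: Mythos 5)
Your proof is correct and is exactly the argument the paper intends: the statement is left as an exercise following from the Fourier formula $\Stab_\rho[f] = \sum_S \rho^{|S|}\wh{f}(S)^2$ and Parseval, which is precisely what you use, including the standard coordinate-flip argument to extract the dictator form from a degree-$1$ Boolean function. The only (trivial) point left implicit is the converse direction of the equality claim, namely that $\Stab_\rho[\pm x_i] = \rho$, which is immediate from the same formula.
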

%(The key intermediate step in the exercise is to show that equality holds iff $f$'s Fourier expansion is homogeneous of degree~$1$.)

This conclusion is somewhat disappointing from the standpoint of election fairness; it says that if our goal is to choose a voting rule that minimizes the effect of misrecorded votes (assuming $0 < \delta < \frac12$), the ``best'' choice is dictatorship (or negated-dictatorship).

Incidentally, this is precisely the disappointment that occurs in \emph{Arrow's Theorem}~\cite{Arr50}, the seminal result in social choice theory.  In brief, Arrow's Theorem is concerned with what happens when~$n$ voters try to rank \emph{three} candidates by means of holding three pairwise elections using Boolean voting rule~$f$.  The well-known \emph{Condorcet Paradox}~\cite{dC85} is that for some~$f$ --- including $f = \Maj_n$ --- it is possible to get an ``irrational'' outcome in which the electorate prefers Candidate~$A$ to Candidate~$B$, prefers Candidate~$B$ to Candidate~$C$, and prefers Candidate~$C$ to Candidate~$A$.  Arrow showed that the only~$f$'s which \emph{always} yield ``rational'' outcomes are dictators and negated-dictators.  Kalai~\cite{Kal02} gave a very elegant Fourier-analytic proof of Arrow's Theorem by noting that when the voters' individual rankings are uniformly random, the probability of a rational outcome is precisely $\frac34 - \frac34 \Stab_{-\frac13}[f]$ (which also equals $\frac34 + \frac34 \Stab_{\frac13}[f]$ for odd~$f$).  Then Arrow's conclusion follows from Fact~\ref{fact:stab-max}.  Kalai also obtained a robust version of Arrow's Theorem by using the \emph{FKN Theorem}~\cite{FKN02} from the analysis of Boolean functions: Any $f$ that achieves a rational outcome with probability at least $1-\delta$ must agree with some (negated-)dictator on all but an $O(\delta)$-fraction of inputs.

Just as we ruled out constant functions~$f$ by insisting on unbiasedness, we might also try to rule out dictatorships (and similar functions) by insisting that~$f$ give only negligible \emph{influence} to each individual voter.  Here we refer to the following definitions:
\begin{definition}
    Let $f \btR$.  For $i \in [n]$, the \emph{(discrete) $i$th derivative} is
    \[
        \D_i f(x) = \tfrac{f(x_1, \dots, x_{i-1}, 1, x_{i+1}, \dots, x_n) - f(x_1, \dots, x_{i-1}, -1, x_{i+1}, \dots, x_n)}{2} = \sum_{S \ni i} \wh{f}(S) \prod_{j \in S \setminus \{i\}} x_j.
    \]
    The \emph{$i$th influence of~$f$} is
    \[
        \Inf_i[f] = \E_{\bx \sim \bn}[\D_if(\bx)^2] = \sum_{S \ni i} \wh{f}(S)^2.
    \]
    Note that when $f \btb$ we also have
    \[
        \Inf_i[f] = \Pr_{\bx \sim \bn}[f(\bx) \neq f(\bx_1, \dots, \bx_{i-1}, -\bx_i, \bx_{i+1}, \dots, \bx_n)].
    \]
\end{definition}
If $f \btb$ is a voting rule, $\Inf_i[f]$ represents the probability that the $i$th voter's vote is pivotal for the outcome.  (This notion was originally introduced by the geneticist Penrose~\cite{Pen46}; it was independently popularized in the social choice literature by the lawyer Banzhaf~\cite{Ban65}.)  The $i$th influence also has an interpretation in terms of the ``geometry'' of the discrete cube graph: if we think of $f \btb$ as the indicator of a vertex set $A \subseteq \bn$, then $\Inf_i[f]$ is fraction of edges in the $i$th coordinate direction that are on $A$'s boundary.

In the interest of fairness, one might want to disallow voting rules $f \btb$ that give unusually large influence to any one voter.  This would disqualify a dictator voting rule like $f(x) = x_i$ since it has $\Inf_i[f] = 1$ (which is maximum possible).  On the other hand, the majority voting rule is quite fair in this regard, since all of its influences are quite small: using Stirling's formula one can compute $\Inf_i[\maj_n] \sim \sqrt{\frac{2}{\pi}} \frac{1}{\sqrt{n}} \xrightarrow{n \to \infty} 0$ for all $i \in [n]$.

We can now ask a question that will occupy us for a significant portion of  this survey:

\begin{question}                                        \label{ques:misc}
    Let ${0 < \rho < 1}$.  Assume $f \btb$ is unbiased and satisfies $\max_i \{\Inf_i[f]\} \leq o_n(1)$. How large can $\Stab_\rho[f]$ be?
\end{question}
We can think of this question as asking for the ``fair'' voting rule that minimizes the effect of misrecorded votes in a noisy election.  Alternatively, the case of $\rho = \frac13$ corresponds to asking for the ``fair'' odd voting rule which maximizes the probability of a ``rational'' outcome in the context of Arrow's Theorem.

Since majority rule seems like a fair voting scheme, it's natural to ask how well it does.  For $n \to \infty$, this can be estimated using the Central Limit Theorem:
\begin{align*}
    \Stab_\rho[\Maj_n] =& \E_{\substack{(\bx, \by) \textnormal{ $\rho$-correlated} \\ \textnormal{strings}}}\left[\sgn\left(\tfrac{\bx_1 + \cdots + \bx_n}{{\sqrt{n}}}\right)\sgn\left(\tfrac{\by_1 + \cdots + \by_n}{{\sqrt{n}}}\right)\right] \\
    \xrightarrow{n \to \infty}& \E_{\substack{(\bz, \bz') \textnormal{ $\rho$-correlated} \\ \textnormal{Gaussians}}}[\sgn(\bz)\sgn(\bz')] = 1 - 2\Pr[\sgn(\bz) \neq \sgn(\bz')],
\end{align*}
where we say $(\bz, \bz')$ is a \emph{$\rho$-correlated pair of Gaussians} if the random variables $\bz,\bz'$ are joint standard normals with $\E[\bz \bz'] = \rho$. An equivalent definition is that ${\bz = \la \vec{u}, \vec{\bg}\ra}$ and $\bz' = \la \vec{v}, \vec{\bg} \ra$, where $\vec{\bg}$ is drawn from the standard $d$-dimensional Gaussian distribution~$\gamma_d$ and $\vec{u}, \vec{v} \in \R^d$ are any two unit vectors satisfying ${\la \vec{u}, \vec{v} \ra = \rho}$.  (In particular, we can take $\bz = \vec{\bg}_1$, $\bz' = \rho \vec{\bg}_1 + \sqrt{1-\rho^2} \vec{\bg}_2$.) Using this latter definition, it's not hard to verify the following old~\cite{She99} fact:
\begin{proposition}[Sheppard's Formula]                                     \label{prop:sheppard}
    If $(\bz, \bz')$ are $\rho$-correlated Gaussians, $-1 \leq \rho \leq 1$, then $\Pr[\sgn(\bz) \neq \sgn(\bz')] = \tfrac{1}{\pi} \arccos \rho$.
\end{proposition}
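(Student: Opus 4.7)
The plan is to exploit the rotational symmetry of the standard Gaussian together with the geometric representation the excerpt provides. By that representation, I can take $d = 2$ without loss of generality: write $\bz = \langle \vec{u}, \vec{\bg}\rangle$ and $\bz' = \langle \vec{v}, \vec{\bg}\rangle$ with $\vec{\bg} \sim \gamma_2$ and $\vec{u}, \vec{v} \in \R^2$ unit vectors satisfying $\langle \vec{u}, \vec{v}\rangle = \rho$. Set $\theta = \arccos \rho \in [0,\pi]$, so $\theta$ is the angle between $\vec{u}$ and $\vec{v}$.

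Next I would translate the event into a statement about the location of $\vec{\bg}$ in the plane. The sign of $\bz$ is determined by which side of the line $L_u := \vec{u}^\perp$ the point $\vec{\bg}$ lies on, and similarly for $\bz'$ and $L_v := \vec{v}^\perp$. Thus $\sgn(\bz) \neq \sgn(\bz')$ precisely when $\vec{\bg}$ lies in one of the two open wedges cut out by $L_u$ and $L_v$ on which $\vec{u}$ and $\vec{v}$ disagree. Since $L_u$ and $L_v$ make angle $\theta$ with each other (being orthogonal to $\vec{u}$ and $\vec{v}$), these two wedges each have angular width $\theta$, for a total angular measure of $2\theta$.

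Finally, I would invoke the key property that $\gamma_2$ is rotationally invariant, so its mass on any union of angular sectors is proportional to the total angular measure. The disagreement event therefore has probability $\frac{2\theta}{2\pi} = \frac{\arccos \rho}{\pi}$, as required. The only point requiring any care is ensuring that the boundary lines $L_u, L_v$ contribute zero probability --- but this is immediate because $\vec{\bg}$ is absolutely continuous --- and verifying the angle computation in the two edge cases $\rho = \pm 1$, where $\vec{u} = \pm \vec{v}$ and the formula reduces to $0$ and $1$ respectively. There is no genuine obstacle; the proof is essentially a picture.
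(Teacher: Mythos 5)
Your proof is correct and is exactly the argument the paper intends when it says the formula is ``not hard to verify'' from the representation $\bz = \langle \vec{u}, \vec{\bg}\rangle$, $\bz' = \langle \vec{v}, \vec{\bg}\rangle$: reduce to $d=2$, identify the disagreement event with two opposite wedges of total angle $2\arccos\rho$, and apply rotational invariance of $\gamma_2$. No gaps; the edge cases and null-boundary issues are handled appropriately.
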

Taking care with the error term in the Central Limit Theorem, one may deduce:
\begin{proposition}                                     \label{prop:maj-stab}
    For fixed $-1 < \rho < 1$,
    \[
        \Stab_\rho[\Maj_n] = 1 - \tfrac{2}{\pi} \arccos \rho + O(\tfrac{1}{\sqrt{n}}).
    \]
\end{proposition}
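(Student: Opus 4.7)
The plan is to combine Sheppard's Formula (Proposition~\ref{prop:sheppard}) with a quantitative two-dimensional central limit theorem. Since $\Maj_n$ is $\pm 1$-valued,
\[
    \Stab_\rho[\Maj_n] \;=\; 1 - 2\Pr\!\bigl[\sgn(\bx_1+\cdots+\bx_n) \neq \sgn(\by_1+\cdots+\by_n)\bigr].
\]
Let $\vec{W}_n = \tfrac{1}{\sqrt{n}} \sum_{i=1}^n (\bx_i,\by_i) \in \R^2$, a normalized i.i.d.\ sum of bounded mean-zero $\R^2$-vectors with covariance $\Sigma = \bigl(\begin{smallmatrix}1&\rho\\\rho&1\end{smallmatrix}\bigr)$; this is precisely the covariance of the limiting Gaussian pair $(\bz,\bz')$. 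The event of interest is $\{\vec{W}_n \in Q_+ \cup Q_-\}$, the union of the two open convex quadrants $Q_+ = \{(a,b):a>0,\ b<0\}$ and $Q_- = \{(a,b):a<0,\ b>0\}$ (the axes being a measure-zero set, which moreover $\vec{W}_n$ avoids exactly when $n$ is odd).

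The main ingredient I would quote is the quantitative multivariate CLT on convex sets due to Bentkus (refining earlier work of G\"otze): for i.i.d.\ mean-zero vectors in $\R^d$ with bounded third moment and nondegenerate covariance $\Sigma$, and $\vec{Z} \sim N(0,\Sigma)$,
\[
    \sup_{C \subseteq \R^d\text{ convex}} \bigl|\Pr[\vec{W}_n \in C] - \Pr[\vec{Z} \in C]\bigr| \;\leq\; \frac{K_{d,\Sigma}}{\sqrt{n}}.
\]
Since $|\rho|<1$ the covariance $\Sigma$ is nondegenerate, so applying this in dimension $d=2$ with $C = Q_+$ and $C = Q_-$ separately, then adding, yields
\[
    \Pr[\vec{W}_n \in Q_+ \cup Q_-] \;=\; \Pr[\sgn(\bz)\neq\sgn(\bz')] + O_\rho\!\bigl(n^{-1/2}\bigr).
\]
Sheppard's Formula identifies the Gaussian probability on the right as $\tfrac{1}{\pi}\arccos\rho$, and plugging back in gives the claim.

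The principal obstacle is that the test function $\mathbf{1}_{Q_+\cup Q_-}$ is discontinuous, so a ``smoothed'' CLT against Lipschitz observables does not suffice; one genuinely needs a Kolmogorov-type CLT over convex (or at least axis-aligned quadrant) sets. A more hands-on alternative that avoids the convex-set Berry--Esseen black box exploits symmetry: the involution $(\bx,\by) \mapsto (-\bx,-\by)$ together with the exact identity $\Pr[\sum\bx_i>0] = \Pr[\sum\by_i>0] = \tfrac12$ (valid for $n$ odd) reduces the task to estimating the single orthant probability $\Pr[\sum\bx_i < 0,\ \sum\by_i < 0]$; conditioning on $\sum\bx_i$ and applying the one-dimensional Berry--Esseen theorem to the conditional distribution of $\sum\by_i$ (the $\by_i$ are conditionally independent with conditional mean $\rho\bx_i$), then integrating against the Lipschitz Gaussian CDF, recovers the same $O(1/\sqrt{n})$ rate. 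Either route produces an implicit constant that depends on $\rho$ and blows up as $|\rho|\to 1$, which is consistent with the proposition fixing $\rho \in (-1,1)$.
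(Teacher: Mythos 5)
Your proposal is correct and follows exactly the route the paper intends: the text derives the limit $\Stab_\rho[\Maj_n] \to 1 - \tfrac{2}{\pi}\arccos\rho$ via the two-dimensional CLT and Sheppard's Formula, and then merely asserts that ``taking care with the error term'' yields the $O(1/\sqrt{n})$ rate, which is precisely what your quantitative multivariate Berry--Esseen bound over the two convex quadrants (or the conditioning-plus-one-dimensional-Berry--Esseen alternative) supplies. Both of your routes are sound, and your remark that the implicit constant degrades as $|\rho| \to 1$ correctly explains why the proposition fixes $\rho \in (-1,1)$.
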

\noindent As a corollary, the probability of a ``rational'' outcome when using $\maj_n$ in a three-way election tends to $\frac{3}{2\pi} \arccos(-\frac13) \approx 91\%$, a fact known as \emph{Guilbaud's Theorem}~\cite{Gui52}.\\

%For example, $\Stab_{\frac{1}{\sqrt{2}}}[\Maj_n] \xrightarrow{n \to \infty} \half$.
Is there a ``fair'' voting rule with even higher noise stability?  In 2004, Khot et~al.~\cite{KKMO04,KKMO07} conjectured the result below, stating that majority essentially gives the best possible answer to Question~\ref{ques:misc}.  A year later their conjecture was proven by Mossel et~al.~\cite{MOO05,MOO10}:
\begin{theorem}[``Majority Is Stablest Theorem'']                                     \label{thm:mist}
    Fix $0 < \rho < 1$.  Assume ${f \btI}$ satisfies $\E[f(\bx)] = 0$ and $\max_i \{\Inf_i[f]\} \leq \eps$.  Then
    \[
        \Stab_\rho[f] \leq 1 - \tfrac{2}{\pi} \arccos \rho + o_\eps(1).
    \]
    (Furthermore, for $-1 < \rho < 0$ the inequality holds in reverse and the hypothesis $\E[f(\bx)] = 0$ is unnecessary.)
\end{theorem}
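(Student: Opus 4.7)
\emph{Proof plan.} The strategy is to reduce Theorem~\ref{thm:mist} to a purely Gaussian isoperimetric problem, via three stages: truncation to low Fourier degree, the invariance principle, and Borell's theorem. For the first stage, using Fact~\ref{fact:Trho-fourier} together with $\sum_S \wh{f}(S)^2 \leq 1$ (which holds because $|f| \leq 1$), for any integer $k$ one has
\[
    \Stab_\rho[f] \leq \Stab_\rho[f^{\leq k}] + \rho^{k+1}, \qquad f^{\leq k}(x) := \sum_{|S| \leq k} \wh{f}(S) \prod_{i \in S} x_i.
\]
Choosing $k = k(\eps) \to \infty$ sufficiently slowly makes $\rho^{k+1} = o_\eps(1)$, so it suffices to bound $\Stab_\rho[f^{\leq k}]$.

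For the second stage, I consider the multilinear polynomial $F: \R^n \to \R$ with the same coefficients, $F(z) = \sum_{|S| \leq k} \wh{f}(S) \prod_{i \in S} z_i$. The invariance principle of Mossel--O'Donnell--Oleszkiewicz asserts that for any sufficiently smooth test function $\psi$, the quantity
\[
    \bigl|\E_{\bx \sim \bn}[\psi(f^{\leq k}(\bx))] - \E_{\vec{\bg} \sim \gamma_n}[\psi(F(\vec{\bg}))]\bigr|
\]
tends to $0$ as $\max_i \Inf_i[f^{\leq k}] \to 0$, with $k$ and $\psi$ fixed. Its proof is a Lindeberg-style hybrid argument, swapping $\bx_i \leadsto \vec{\bg}_i$ one coordinate at a time; the fourth-order Taylor remainder involves the $L^4$ norm of the degree-$k$ polynomial $\D_i F$, which is controlled by the $(2,4)$-hypercontractive inequality of Bonami. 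Applying this with $\psi(t) \approx \max(|t|-1, 0)^2$ shows that $F(\vec{\bg})$ essentially lies in $[-1,1]$; applying a two-variable version to the pair $(f^{\leq k}(\bx), f^{\leq k}(\by))$ on $\rho$-correlated strings equates $\Stab_\rho[f^{\leq k}]$ with the Gaussian noise stability of $F$, up to $o_\eps(1)$. After clipping $F$ into $[-1,1]$ (at negligible cost, since $F$ is already essentially there), the problem becomes: bound the Gaussian noise stability of an unbiased $\widetilde{F}: \R^n \to [-1,1]$.

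For the final stage I invoke Borell's isoperimetric inequality, which says that among all $G: \R^n \to [-1,1]$ with $\E[G(\vec{\bg})] = 0$, the Gaussian noise stability $\E[G(\vec{\bg}) G(\rho \vec{\bg} + \sqrt{1-\rho^2} \vec{\bg}')]$ is maximized by a half-space through the origin, $G_\star(z) = \sgn(\la \vec{u}, z\ra)$; for such $G_\star$, Sheppard's Formula (Proposition~\ref{prop:sheppard}) evaluates the stability as exactly $1 - \tfrac{2}{\pi} \arccos \rho$. Combining the three stages yields the theorem. The main obstacle is the invariance principle itself: one must quantify its error (a typical bound takes the form $O_k(\sqrt{\eps})$ with an exponential-in-$k$ prefactor coming from hypercontractivity of degree-$k$ polynomials) and then choose $k = k(\eps) \to \infty$ slowly enough that both $\rho^k$ and this error are $o_\eps(1)$. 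The truncation and Borell stages are essentially one-line invocations given this setup.
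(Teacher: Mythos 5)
Your outline is correct, but it is a genuinely different route from the one this survey presents: what you have sketched is the original Mossel--O'Donnell--Oleszkiewicz proof (truncate to low Fourier degree, apply the Invariance Principle with hypercontractivity controlling the Lindeberg remainders, then finish with Borell's inequality and Sheppard's Formula), and each stage is sound --- in particular the bound $\Stab_\rho[f] \leq \Stab_\rho[f^{\leq k}] + \rho^{k+1}$, and the facts that degree truncation preserves the mean and only decreases influences. The proof given in Section~\ref{sec:DMN}, due to De, Mossel, and Neeman, instead proves the two-function inequality $\E[\Lambda_\rho(f(\bx),g(\by))] \leq \Lambda_\rho(\E[f],\E[g]) + O_{\rho,\eta}(1)\cdot\sum_i (\|\mathrm{d}_i f\|_3^3 + \|\mathrm{d}_i g\|_3^3)$ directly by induction on $n$, using a Taylor expansion of $\Lambda_\rho$ and its $\rho$-concavity $H_\rho\Lambda_\rho \preccurlyeq 0$ to kill the second-order term, and then upgrades the accumulated cubic errors to $\sum_i \Inf_i[f]^{1+\delta} \leq \eps^{\delta}$ via hypercontractivity. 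The trade-off is instructive. Your route consumes Borell's Isoperimetric Inequality as a black box, so it inherits whatever geometric machinery (Ehrhard or spherical symmetrization, Poincar\'e's limit, semigroup methods) is used to prove it, and its technical heart is a fully quantitative multi-function Invariance Principle --- which you rightly identify as the main obstacle. The De--Mossel--Neeman route stays entirely discrete, \emph{derives} Borell's inequality as a corollary of the error-free limit, and is simple enough to be formalized in the Sum-of-Squares proof system (the point of Section~\ref{sec:conclusion}), which the invariance-based proof is not known to be. Both use hypercontractivity, just in different places: yours to control Taylor remainders of degree-$k$ polynomials in the swap, theirs to convert $\|\mathrm{d}_i f\|_3^3$ into $\Inf_i[f]^{1+\delta}$. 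One residual technicality worth flagging in your plan: after clipping $F$ into $[-1,1]$ its mean is only approximately zero, so you need either a small recentering or the continuity of the extremal bound in the mean, which is cleanest via the two-set quantity $\Lambda_\rho(\alpha,\beta)$ of Theorem~\ref{thm:borell}.
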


Peculiarly, the motivation in Khot et~al.~\cite{KKMO04} for conjecturing the above had nothing to do with social choice and voting.  Instead, the conjecture was precisely what was needed to establish the computational complexity of finding approximately maximum \emph{cuts} in graphs.  We discuss this motivation next.

\section{The computational complexity of Max-Cut}              \label{sec:max-cut}

The \emph{Max-Cut} problem is the following fundamental algorithmic task: Given as input is an undirected graph $G = (V,E)$.  The goal is to find a partition ${V = V^+ \cup V^-}$ so as to maximize the fraction of \emph{cut} edges.  Here we say $e \in E$ is ``cut'' if it has one endpoint in each of $V^\pm$.  We write $\Opt(G)$ to denote the value of the best possible solution; i.e., the maximum fraction of edges in~$G$ that can be cut.  For example, $\Opt(G) = 1$ iff $G$ is bipartite.

Unfortunately, the Max-Cut problem is known to be \emph{$\NP$-hard}~\cite{Kar72}.  This means that there is no efficient (i.e., $\poly(|V|)$-time) algorithm for determining $\Opt(G)$, assuming the well-believed $\PTIME \neq \NP$ Conjecture.  Under the closely related \linebreak $\coNP \neq \NP$ Conjecture, we can also state this difficulty as follows: It is \emph{not} true that whenever~$G$ is a graph satisfying $\Opt(G) \leq \beta$, there is a short (i.e., $\poly(|V|)$-length) proof of the statement ``$\Opt(G) \leq \beta$''.

Max-Cut is perhaps the simplest nontrivial \emph{constraint satisfaction problem (CSP)}.  Rather than formally defining this class of problems, we'll simply give two more examples.  In the \emph{Max-3Lin} problem, given is a system of equations over~$\F_2$, each of the form ``$x_{i_1} + x_{i_2} + x_{i_3} = b$''; the task is to find an assignment to the variables $x_1, \dots, x_n$ so as to maximize the fraction of satisfied equations.  In the \emph{Max-3Coloring} problem, given is an undirected graph; the task is to color the vertices using~$3$ colors so as to maximize the fraction of bichromatic edges.  %These two examples also happen to be direct generalizations of Max-Cut; we get Max-

For all of these CSPs the task of determining $\Opt(\cdot)$ is $\NP$-hard.  One way to cope with this difficulty is to seek \emph{approximation algorithms}:
\begin{definition}
    Let $0 \leq \alpha \leq \beta \leq 1$.  Algorithm $\calA$ is said to be \emph{$(\alpha,\beta)$-approximating} for a certain CSP (e.g., Max-Cut) if it has the following guarantee: For every input~$G$ satisfying $\Opt(G) \geq \beta$, the algorithm finds a solution of value at least~$\alpha$.  If $\calA$ is a randomized algorithm, we allow it to achieve value at least~$\alpha$ \emph{in expectation}.  Note that a fixed~$\calA$ may be $(\alpha,\beta)$-approximating for many pairs $(\alpha, \beta)$ simultaneously.
\end{definition}
\begin{example}
    There is a simple greedy algorithm that is $(1,1)$-approximating for Max-Cut; i.e., given a bipartite~$G$, it finds a bipartition.  Similarly, one can efficiently $(1,1)$-approximate Max-3Lin using Gaussian elimination.  On the other hand, $(1,1)$-approximating Max-3Coloring --- i.e., validly $3$-coloring $3$-colorable graphs --- is $\NP$-hard.  For Max-3Lin the near-trivial algorithm of outputting $x_1 = \cdots = x_n = B$, where $B$ is the more common ``right-hand side'' of the system, is a $(\half, \beta)$-approximation for every~$\half \leq \beta \leq 1$. One can also get an efficient $(\half, \beta)$-approximation for Max-Cut (for any~$\beta$) either by a simple greedy algorithm, or by outputting a \emph{random} partition $V = V^+ \cup V^-$.  %Outputting a \emph{random} $3$-coloring is a $(\frac23, \beta)$-approximation for Max-3Coloring for every $\frac23 \leq \beta \leq 1$.
    The classical statement that ``Max-Cut is $\NP$-hard'' is equivalent to stating that \emph{there exists} $\half < \beta < 1$ such that $(\beta, \beta)$-approximating Max-Cut is $\NP$-hard (in fact, this is true for all $\half < \beta < 1$).
\end{example}
In the case of Max-3Lin, it is a rather astonishing fact that the trivial approximation algorithms mentioned above are best possible assuming $\PTIME \neq \NP$; this is a celebrated result of H{\aa}stad~\cite{Has97,Has01} combining ``PCP technology''~\cite{FGL+96,AS98,ALM+98,BGS98} and Fourier analysis of Boolean functions:
\begin{theorem}                                     \label{thm:hastad}
    For any $\delta > 0$, it's $\NP$-hard to $(\half + \delta, 1 - \delta)$-approximate Max-3Lin.
\end{theorem}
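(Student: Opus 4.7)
The plan is to follow the ``PCP + Long Code'' paradigm that H{\aa}stad pioneered. First I would invoke the PCP theorem (together with Raz's parallel repetition) in its Label Cover formulation: for every $\eta > 0$, it is $\NP$-hard to distinguish Label Cover instances of value~$1$ from those of value at most~$\eta$, where an instance consists of a bipartite constraint graph with projection constraints $\pi_{uv}\colon [L] \to [K]$ and its value is the largest fraction of constraints simultaneously satisfiable by a vertex labeling.

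Second, a purported satisfying assignment is encoded by supplying, for each vertex~$v$, the \emph{Long Code} of its intended label: a ``folded'' (odd) Boolean function $f_v \colon \bits^{K_v} \to \bits$ that is supposed to be the dictator $f_v(y) = y_{\ell(v)}$. A 3-query verifier then samples a random constraint $(u,v,\pi)$, draws $\bx \sim \bits^L$ and $\by \sim \bits^K$ uniformly, picks $\mu \in \bits^L$ with $\Pr[\mu_j = -1] = \delta$ independently, sets $\bz_j = \bx_j \by_{\pi(j)} \mu_j$, and accepts iff $f_u(\bx) f_v(\by) f_u(\bz) = +1$. On honest dictators that satisfy the chosen constraint, a direct computation gives $f_u(\bx) f_v(\by) f_u(\bz) = \mu_{\ell(u)}$, so the test accepts with probability exactly $1 - \delta$, establishing completeness.

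For soundness I would Fourier-expand the acceptance bias. The independent randomness of $\bx, \by, \mu$ collapses the triple product to
\[
    \E\bigl[f_u(\bx) f_v(\by) f_u(\bz)\bigr] \;=\; \sum_{R \subseteq [L]} (1-2\delta)^{|R|}\, \wh{f_u}(R)^2 \, \wh{f_v}(\pi(R)),
\]
where $\pi(R) \subseteq [K]$ denotes the parity image of~$R$. If acceptance exceeds $\tfrac12 + \delta$ by any constant margin this sum exceeds a constant; the factor $(1-2\delta)^{|R|}$ then suppresses large $|R|$, forcing significant Fourier weight on small sets. One extracts a randomized labeling at each vertex~$v$ by sampling $T$ with probability $\wh{f_v}(T)^2$ and outputting a uniformly random coordinate of $T$; a Cauchy--Schwarz argument converts the displayed Fourier identity into a constant lower bound on the expected Label Cover value achieved, which (for $\eta$ small enough in the starting PCP) contradicts the assumed hardness.

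The main obstacle is this soundness calculation. Folding must be in place to kill even-degree Fourier characters that would otherwise pollute the identity; the verifier's distribution must be chosen so that the extracted per-vertex distributions on label sets are consistent across constraints; and one must quantitatively argue that $|\pi(R)|$ is typically comparable to $|R|$ so that the decoded labels are informative with non-negligible probability. Calibrating the final loss in $\eta$ against the constants in the decoding is the subtle bookkeeping that converts the Fourier identity into a bona fide polynomial-time reduction yielding the claimed $(\tfrac12 + \delta,\, 1-\delta)$-hardness.
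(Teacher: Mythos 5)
The paper does not actually prove this theorem; it is quoted from H{\aa}stad~\cite{Has97,Has01}, so there is no internal argument to compare against. Your sketch is a faithful outline of H{\aa}stad's proof, and its skeleton is correct: Label Cover hardness from the PCP theorem plus parallel repetition, folded Long Code tables, the three-query test with completeness exactly $1-\delta$, the Fourier identity
$\E[f_u(\bx)f_v(\by)f_u(\bz)] = \sum_R (1-2\delta)^{|R|}\wh{f_u}(R)^2\,\wh{f_v}(\pi(R))$,
and list-decoding of labels from Fourier mass.

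Two points you gloss over deserve emphasis, since they are where the content lies. First, folding is needed not only to kill even characters in the analysis but to make the reduction nontrivial as a Max-3Lin instance: an unfolded test of the form ``product of three table entries equals $+1$'' produces only homogeneous equations over $\F_2$, which the all-zeros assignment satisfies; it is folding that introduces right-hand sides equal to~$1$, and without it the completeness/soundness gap is meaningless. Second, the ``Cauchy--Schwarz argument'' is the crux of soundness: one needs $\wh{f_v}(\emptyset)=0$ (from folding) so that terms with $\pi(R)=\emptyset$ vanish, and the quantitative bound $|R|(1-2\delta)^{2|R|} = O(1/\delta)$ so that the $1/|R|$ loss from decoding a uniformly random coordinate of~$R$ is absorbed by the noise attenuation. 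With these, a bias of $\delta'$ on a constant fraction of constraints yields an expected Label Cover value of order $\delta\,\delta'^2$, which exceeds $\eta$ for $\eta$ chosen small enough in the starting instance. Your concern about $|\pi(R)|$ versus $|R|$ is handled automatically by this decoding: each element of $\pi(R)$ has a preimage in $R$, so matching a random element of $R$ to a random element of $\pi(R)$ succeeds with probability at least $1/|R|$. Filling in these computations gives a complete proof, but as a blind reconstruction of a deep cited theorem your outline identifies all the right ingredients.
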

For quite a long time, it was not known how to do any better even for the much simpler problem of Max-Cut.  This changed in 1994 with the famous and sophisticated result of Goemans and Williamson~\cite{GW94,GW95}  (see also~\cite{DP93}):
\begin{theorem}                                     \label{thm:gw}
    There is an efficient algorithm that $(\frac{\theta}{\pi}, \half - \half \cos \theta)$-approximates Max-Cut for every $\theta \in [\thetagw, \pi]$, where $\thetagw \approx .74\pi$ is the positive solution of ${\tan(\frac{\theta}{2}) = \theta}$.  E.g., the Goemans--Williamson algorithm simultaneously $(\frac34, \frac12 + \frac{1}{2\sqrt{2}})$-approximates, %    \approx .8536
    $(\frac45, \frac58 + \frac{\sqrt{5}}{8})$-approximates, % \approx .9045
    and $(1-\frac2{\pi}\sqrt{\eps} -o(\sqrt{\eps}), 1-\eps)$-approximates Max-Cut.
\end{theorem}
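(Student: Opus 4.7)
The plan is to use a \emph{semidefinite programming (SDP)} relaxation followed by random hyperplane rounding. I would first write Max-Cut as the integer quadratic program $\max \sum_{(i,j) \in E} \frac{1 - x_i x_j}{2}$ subject to $x_i \in \bits$, and relax it by replacing each boolean $x_i$ with a unit vector $\vec{v}_i \in \R^{|V|}$: $\max \sum_{(i,j) \in E} \frac{1 - \la \vec{v}_i, \vec{v}_j \ra}{2}$ subject to $\|\vec{v}_i\| = 1$. This is a genuine SDP, solvable to arbitrary accuracy in polynomial time via the ellipsoid method, and yields a value $\mathrm{SDP}(G) \geq \Opt(G)$. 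Given an optimal vector configuration, I would round it to a cut via a random hyperplane through the origin: sample a standard Gaussian $\vec{\bg} \sim \gamma_{|V|}$ and put vertex $i$ into $V^+$ iff $\la \vec{\bg}, \vec{v}_i \ra > 0$.

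For each edge $(i,j)$, the pair $(\la \vec{\bg}, \vec{v}_i \ra, \la \vec{\bg}, \vec{v}_j \ra)$ is a $\rho_{ij}$-correlated pair of standard Gaussians with $\rho_{ij} = \la \vec{v}_i, \vec{v}_j \ra$, so Sheppard's Formula (Proposition~\ref{prop:sheppard}) gives $\Pr[(i,j) \text{ is cut}] = \tfrac{1}{\pi} \arccos \rho_{ij}$. Thus the expected cut fraction is $\mathrm{Alg}(G) := \tfrac{1}{|E|} \sum_e \tfrac{1}{\pi} \arccos \rho_e$, while $\mathrm{SDP}(G) = \tfrac{1}{|E|} \sum_e \tfrac{1 - \rho_e}{2}$. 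Writing $s = \tfrac{1-\rho}{2}$ and $\phi(s) = \tfrac{1}{\pi}\arccos(1 - 2s)$, the curve $\{(s, \phi(s)) : s \in [0,1]\}$ is parametrized via $\theta \in [0,\pi]$ as $(\tfrac{1-\cos\theta}{2}, \tfrac{\theta}{\pi})$. The crux of the proof is to establish the pointwise inequality $\phi(s) \geq L(s)$ on $[0,1]$, where $L$ is the tangent line to $\phi$ at $s_0 = \tfrac{1-\cos\theta}{2}$. Once this is in hand, averaging over edges gives $\mathrm{Alg}(G) \geq L(\mathrm{SDP}(G))$; since $L$ has positive slope $\phi'(s_0) = \tfrac{2}{\pi \sin\theta}$ and $\mathrm{SDP}(G) \geq \Opt(G) \geq s_0$, I conclude $\mathrm{Alg}(G) \geq L(s_0) = \phi(s_0) = \tfrac{\theta}{\pi}$.

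To verify the tangent-line inequality, a direct computation gives $\phi''(s) = \tfrac{2s-1}{2\pi(s(1-s))^{3/2}}$, so $\phi$ is convex on $[\tfrac12, 1]$ and concave on $[0, \tfrac12]$. For $\theta \in [\thetagw, \pi]$ we have $s_0 > \tfrac12$ (since $\cos\thetagw < 0$), and convexity immediately yields $\phi \geq L$ on $[\tfrac12, 1]$. On $[0, \tfrac12]$ the difference $\phi - L$ is concave, hence minimized at an endpoint; the value at $s = \tfrac12$ is nonnegative by the convex case, and at $s=0$ it equals $\phi(0) - L(0) = \tfrac{\tan(\theta/2) - \theta}{\pi}$, using the identity $\tfrac{1-\cos\theta}{\sin\theta} = \tan(\theta/2)$. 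The main obstacle is precisely this endpoint condition: $\tan(\theta/2) \geq \theta$ holds exactly when $\theta \geq \thetagw$, which is the reason the theorem's range on $\theta$ takes its stated form. The three numerical corollaries then follow by specializing: $\theta = \tfrac{3\pi}{4}$ and $\theta = \tfrac{4\pi}{5}$ give the first two, and for the last, expanding $\arccos(-1 + 2\eps) = \pi - 2\sqrt{\eps} + o(\sqrt{\eps})$ yields the $(1 - \tfrac{2}{\pi}\sqrt{\eps} - o(\sqrt{\eps}), 1-\eps)$ guarantee.
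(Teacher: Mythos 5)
Your proposal is correct and follows exactly the route the paper sketches: the semidefinite relaxation, random hyperplane rounding, and Sheppard's Formula, with the ``not hard to show'' step filled in by the tangent-line analysis of $\phi(s) = \tfrac{1}{\pi}\arccos(1-2s)$ at $s_0 = \tfrac{1-\cos\theta}{2}$. Your endpoint computation correctly identifies $\tan(\theta/2) \geq \theta$ as the condition making the tangent line a global lower bound on $[0,1]$, which is precisely where the threshold $\thetagw$ in the statement comes from, and the three numerical specializations check out.
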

\noindent (Variants of the Goemans--Williamson algorithm that perform well for $\theta < \thetagw$ are also known.)

%\begin{figure}[H]
%    \begin{minipage}[c]{0.67\textwidth}
%        \includegraphics[width=\textwidth]{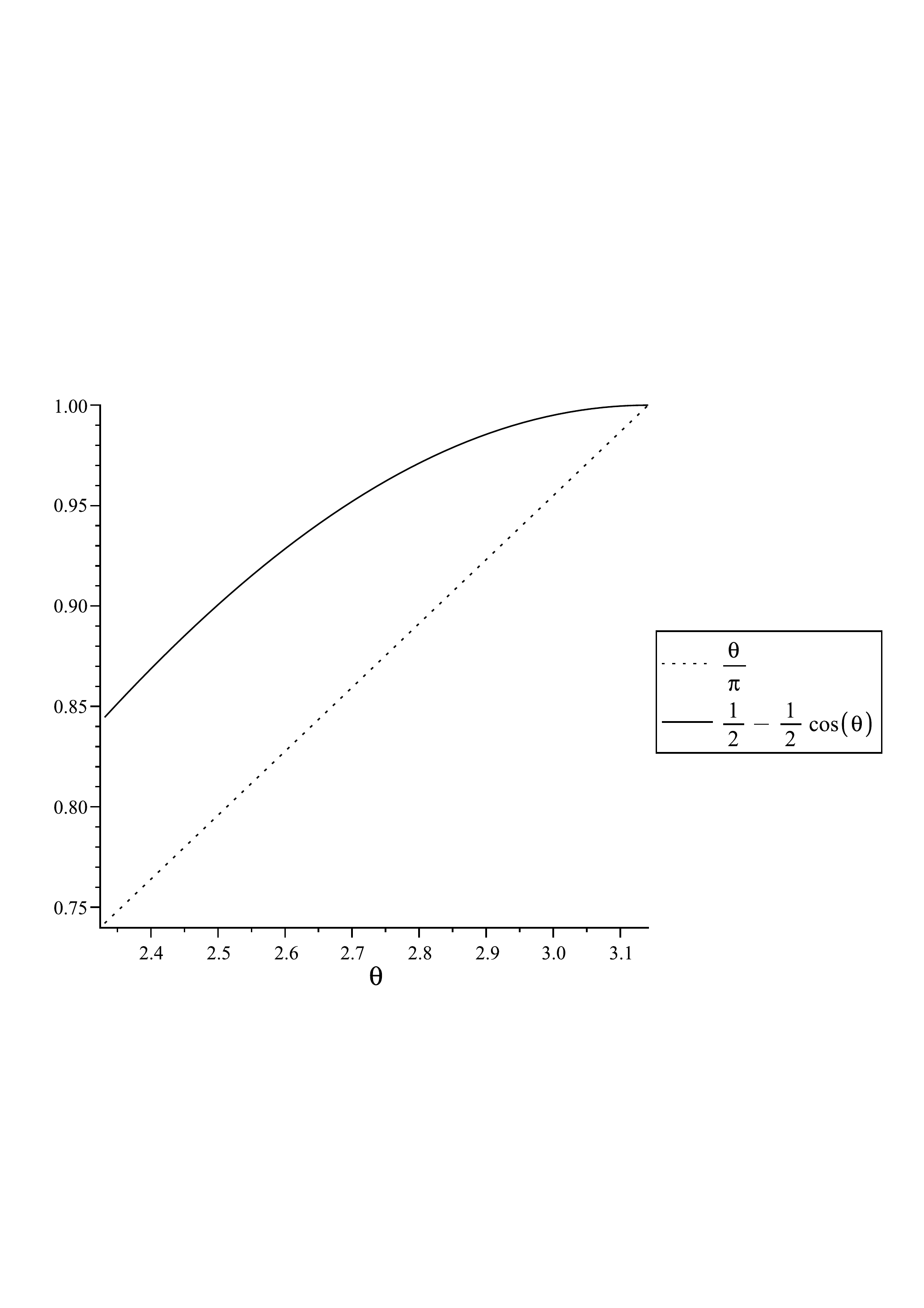}
%    \end{minipage}\hfill
%    \begin{minipage}[c]{0.3\textwidth}
%      Hi
%    \end{minipage}
%%    \begin{minipage}[c]{0.3\textwidth}
%%        \caption{Hi.} \label{fig:gw}
%%    \end{minipage}
%\end{figure}

%\floatbox[{\capbeside\thisfloatsetup{capbesideposition={left,top},capbesidewidth=4cm}}]{figure}[\FBwidth] {\caption{Plot showing the Goemans--Williamson guarantee.}\label{fig:gw}} \end{figure}
%\begin{figure} \centering \includegraphics[width=.75\textwidth]{gw-plot.pdf} \vspace{-1.25in} \caption{Plot showing the Goemans--Williamson guarantee.} \label{fig:gw} \end{figure}

\vspace{-1.4in}
\begin{center}
    \includegraphics[width=.75\textwidth]{gw-plot.pdf}
\end{center}

%plot([theta/Pi, 1/2 - 1/2*cos(theta)], theta = .742*Pi..Pi, color=[black, black], linestyle = [dot, solid], legend=[typeset(theta/Pi, "       "), 1/2 - 1/2*cos(theta)]);

\vspace{-1.2in}

Briefly, the algorithm works as follows:  Given a graph $G = (V,E)$, one considers the following \emph{semidefinite programming} optimization problem:
\begin{equation} \label{eqn:gw-sdp} \tag{SDP}
\begin{aligned}
    \SDPOpt(G)\ \ =\ \ \text{max}& \quad  \avg_{(v,w) \in E} \left[\half - \half \la \vec{U}(v), \vec{U}(w) \ra\right] \\
    \text{subject to}& \quad \vec{U} \co V \to S^{d-1}.
\end{aligned}
\end{equation}
Here one also maximizes over all $d \in \Z^+$, although one can show that it suffices to take $d = |V|$.  Essentially, the optimization problem~\eqref{eqn:gw-sdp} seeks to assign a unit vector to each vertex in~$V$ so that edges in~$G$ are spread as far apart as possible.  It's easy to see that if $d$ is fixed to~$1$ (so that $\vec{U} \co V \to \{-1,1\}$) then~\eqref{eqn:gw-sdp} is identical to the Max-Cut problem; therefore $\Opt(G) \leq \SDPOpt(G)$ always.  Surprisingly, although computing $\Opt(G)$ is intractable, one can efficiently compute $\SDPOpt(G)$.  (Roughly speaking, the reason is that if we introduce real variables $\rho_{vw} = \la \vec{U}(v), \vec{U}(w) \ra$, then~\eqref{eqn:gw-sdp} is equivalent to maximizing a linear function of the $\rho_{vw}$'s over an explicit convex subset of $\R^{|V| \times |V|}$, namely the set of all positive semidefinite matrices $R = (\rho_{vw})_{v,w \in V}$ with $1$'s on the diagonal.)

Thus~\eqref{eqn:gw-sdp} gives us an efficiently-computable upper bound on~$\Opt(G)$. One may hope that it is a relatively ``good'' upper bound, and that furthermore one can prove this constructively by providing an efficient algorithm which converts the optimum ``vector solution'' $(\vec{U}^*(v))_{v \in V}$ to a good ``$\pm 1$ solution'' $(U^*(v))_{v \in V}$ --- i.e., a good bipartition of~$V$.  Goemans and Williamson fulfilled this hope, as follows: Their algorithm first chooses $\vec{\bg}$ to be a standard $d$-dimensional Gaussian and then it outputs the bipartition of~$G$ defined by $U^*(v) = \la \vec{U}^*(v), \vec{\bg} \ra$.  Using Sheppard's Formula, it's not hard to show that this establishes Theorem~\ref{thm:gw}.

The Goemans--Williamson algorithm was originally considered to be quite complex for such a simple CSP as Max-Cut; furthermore, its approximation guarantee seemed quite peculiar.  More than one paper~\cite{Fei99,FS02} suggested the research goal of improving this approximation guarantee.  Furthermore, the best known $\NP$-hardness result for the problem (from~\cite{Has97,TSSW00}) does not match the algorithm.  For example, it's known that $(.875 + \delta, .9)$-approximating Max-Cut is $\NP$-hard for all $\delta > 0$, and the Goemans--Williamson algorithm achieves $(\alpha, .9)$-approximation for $\alpha = 1-\frac{1}{\pi} \arccos\frac{4}{5} \approx .795$.  But whether cutting $80\%$ of the edges in a graph $G$ with $\Opt(G) = 90\%$ is polynomial-time solvable or is $\NP$-hard is unknown.

Nevertheless, in 2004 Khot et al.~\cite{KKMO04} obtained the following ``surprising''~\cite{Joh06} result: Under the \emph{Unique Games Conjecture}~\cite{Kho02} (a notorious conjecture in computational complexity not related to Max-Cut), the Majority Is Stablest Theorem implies that there is no efficient algorithm beating the Goemans--Williamson approximation guarantee (at least for $\theta \in [\thetagw, \pi]$; see~\cite{OW08} for optimal results when $\theta < \thetagw$).  We remark that the while the Unique Games Conjecture is believable, its status is vastly more uncertain than the $\PTIME \neq \NP$ conjecture.

Let us briefly explain what the Majority Is Stablest Theorem has to do with the complexity of the Max-Cut problem.  As shown in~\cite{KKMO04}, the advantage of the Unique Games Conjecture (as opposed to just the $\PTIME \neq \NP$ assumption) is that it makes the ``H{\aa}stad PCP technology'' much easier to use.  Very roughly speaking, it implies that to establish intractability of beating $(\frac{\theta}{\pi}, \half - \half \cos \theta)$-approximation, it suffices to find certain so-called ``gadget graphs'' for the Max-Cut problem.  Precisely speaking, these gadget graphs need to have the following properties:
\begin{itemize}
    \item The vertex set $V$ should be $\bits^n$.  (As a consequence, bipartitions of $V$ correspond to Boolean functions $f \btb$.)
    \item The bipartitions given by the $n$ ``dictators'' $f(x) = x_i$ should each cut at least a $\half - \half \cos \theta$ fraction of the edges.
    \item Any bipartition which is not ``noticeably correlated'' with a dictator partition should not cut ``essentially more'' than a $\frac{\theta}{\pi}$ fraction of the edges.  More precisely, if $f \btb$ is any bipartition of~$V$ with $\max_i \{\Inf_i[f]\} \leq \eps$, then the fraction of edges it cuts is at most $\frac{\theta}{\pi} + o_\eps(1)$.
\end{itemize}
Actually, it's also acceptable for these gadgets to be \emph{edge-weighted} graphs, with nonnegative edge-weights summing to~$1$.  Khot et al.~suggested using the \emph{noisy hypercube} graph on vertex set $\{-1,1\}^n$, in which the weight on edge $(u,v) \in \bn \times \bn$ is precisely $\Pr[\bx = u, \by = v]$ when $(\bx,\by)$ are a $(\cos \theta)$-correlated random strings (note that $\rho = \cos \theta < 0$ for $\theta \in [\thetagw, \pi]$).  Such gadget graphs have the first two properties above, and the Majority Is Stablest Theorem precisely implies that they also have the third property.  It's somewhat surprising that the technical properties required for this Unique Games/PCP-based hardness result correspond so perfectly to a natural problem about voting theory.

Thus subject to the Unique Games Conjecture, no efficient algorithm can improve on the Goemans--Williamson Max-Cut approximation guarantee.  In particular, this means that there must be infinite families of graphs on which the Goemans--Williamson algorithm performs no better than the guarantee established in Theorem~\ref{thm:gw}.  As first shown by Karloff~\cite{Kar99}, the noisy hypercube graphs~$G$ also serve as examples here: Though they have $\Opt(G) = \half - \half \cos \theta$, one optimal solution of~\eqref{eqn:gw-sdp} for these graphs is  $\vec{U}^*(v) = v/\sqrt{d}$, and applying the Goemans--Williamson algorithm to these vectors will indeed give a bipartition cutting only a $\frac{\theta}{\pi}$ fraction of edges in expectation.

Before turning our attention more fully to the Majority Is Stablest Theorem, we should mention a far-reaching generalization of the above-described work in complexity theory, namely the Raghavendra Theory of CSP approximation.  Raghavendra~\cite{Rag08} showed that for \emph{all} CSPs (not just Max-Cut), the natural analogue of the Goemans--Williamson SDP algorithm has optimal approximation guarantee among all efficient algorithms, subject to the Unique Games Conjecture.  This theory will be discussed further in our concluding Section~\ref{sec:conclusion}.

\section{Borell's Isoperimetric Inequality}     \label{sec:borell}

The Majority Is Stablest Theorem concerns Boolean functions, but thanks to the Central Limit Theorem it includes as a ``special case'' a certain inequality concerning \emph{Gaussian geometry} first proved by Borell~\cite{Bor85}.  (In this field, the idea that Boolean inequalities imply Gaussian inequalities dates back to the work of Gross~\cite{Gro75} on the Log-Sobolev Inequality.) To state this Gaussian inequality we first make some definitions:
\begin{definition}
    Let $\vphi$ and $\Phi$ denote the standard Gaussian pdf and cdf, respectively.   Given $z \in \R^d$ and $\rho \in [-1,1]$, we say that the random vector~$\bz'$ is a \emph{$\rho$-correlated Gaussian copy} of~$z$ if $\bz'$ has the distribution $\rho z + \sqrt{1-\rho^2} \bg$, where~$\bg$ is a standard $d$-dimensional Gaussian random vector.  When~$\bz$ is itself a standard $d$-dimensional Gaussian and $\bz'$ is a $\rho$-correlated Gaussian copy, we call $(\bz, \bz')$ a \emph{$\rho$-correlated $d$-dimensional Gaussian pair}.  An equivalent definition is that each pair of random variables $(\bz_i, \bz'_i)$ is a $\rho$-correlated pair of Gaussians (as defined in Section~\ref{sec:arrow}) and the pairs are independent for $i \in [d]$. Note that $(\bz,\bz')$ has the same distribution as $(\bz', \bz)$.
\end{definition}
\begin{remark}                                  \label{rem:sphere-idea}
    The distribution of a $\rho$-correlated $d$-dimensional Gaussian pair $(\bz, \bz')$ is also rotationally symmetric in $\R^d$.  Note that for large~$d$ we'll have $\|\bz\|, \|\bz'\| \sim \sqrt{d}$ and $\la \bz, \bz' \ra \sim \rho d$.  Thus an intuitive picture to keep in mind when~$d$ is large is that $(\bz, \bz')$ is roughly distributed as a uniformly random pair of vectors of length $\sqrt{d}$ and angle $\arccos \rho$.
\end{remark}
\begin{definition}
    The \emph{Ornstein--Uhlenbeck semigroup} of operators is defined as follows: For $\rho \in [-1,1]$, the
    operator $\U_\rho$ acts on %(Gaussian-integrable)
    functions $f \co \R^d \to \R$ by
    \[
        \U_\rho f(z) = \E_{\bz' \text{ a $\rho$-correlated Gaussian copy of~$z$}}[f(\bz')].
    \]
    We also define the \emph{Gaussian noise stability of~$f$ at~$\rho$} to be
    \[
        \Stab_\rho[f] = \E_{\substack{(\bz, \bz') \textnormal{ $\rho$-correlated} \\ \textnormal{ $d$-dimensional Gaussian pair}}}[f(\bz)f(\bz')].
    \]
\end{definition}
We can now state the ``Gaussian special case'' of Majority Is Stablest:
\begin{theorem}                                     \label{thm:gaussian-mist}
    Fix $0 < \rho < 1$.  Assume $h \co \R^d \to [-1,1]$ satisfies $\E_{\bz \sim \gamma_d}[h(\bz)] = 0$.  Then its Gaussian noise stability satisfies
    \[
        \Stab_\rho[h] \leq 1 - \tfrac{2}{\pi} \arccos \rho.
    \]
    (Furthermore, for $-1 < \rho < 0$ the inequality holds in reverse and the hypothesis $\E[h] = 0$ is unnecessary.)
\end{theorem}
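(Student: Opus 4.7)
The plan is to deduce Theorem~\ref{thm:gaussian-mist} from the Majority Is Stablest Theorem (Theorem~\ref{thm:mist}) via a multivariate central limit theorem reduction, which is the Boolean-to-Gaussian bridge mentioned in the introduction. Given $h : \R^d \to [-1,1]$ with $\E[h]=0$, for each large $n$ I build a Boolean test function $f_n : \{-1,1\}^{nd} \to [-1,1]$ by plugging normalized block-sums into each argument of $h$:
\[
    f_n(x^{(1)}, \ldots, x^{(d)}) \;=\; h\!\left(\tfrac{x^{(1)}_1 + \cdots + x^{(1)}_n}{\sqrt{n}},\ \ldots,\ \tfrac{x^{(d)}_1 + \cdots + x^{(d)}_n}{\sqrt{n}}\right).
\]
By the multivariate CLT applied to the $d$ independent blocks, on a uniformly random $\bx \sim \{-1,1\}^{nd}$ the argument of $h$ converges in distribution to $\gamma_d$, and on a $\rho$-correlated Boolean pair $(\bx, \by)$ the blocks yield independent $\rho$-correlated one-dimensional Gaussian pairs in the limit, i.e.\ a $\rho$-correlated $d$-dimensional Gaussian pair. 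Provided $h$ is continuous enough for weak convergence to pass through expectations, this yields $\E[f_n] \to 0$ and $\Stab_\rho[f_n] \to \Stab_\rho[h]$.

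The second step is to verify the influence hypothesis of MIST. If $h$ is $L$-Lipschitz then flipping a single bit $x^{(j)}_i$ shifts the $j$-th coordinate of $h$'s argument by $\pm 2/\sqrt{n}$ and leaves the others fixed, so $|\D_{(j,i)} f_n| \leq L/\sqrt{n}$ pointwise, giving $\Inf_{(j,i)}[f_n] \leq L^2/n = o_n(1)$. After a shift of magnitude $|\E[f_n]| = o_n(1)$ to zero out the mean exactly (which perturbs $\Stab_\rho$ by $o_n(1)$), MIST delivers
\[
    \Stab_\rho[f_n] \;\leq\; 1 - \tfrac{2}{\pi}\arccos\rho + o_n(1),
\]
and letting $n\to\infty$ gives the desired Gaussian inequality for Lipschitz $h$.

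The main technical obstacle is that $h$ is only assumed bounded and measurable. To overcome this I would first replace $h$ by $\U_{1-\eta} h$ for small $\eta > 0$: this function is $[-1,1]$-valued with mean zero, is smooth, and is Lipschitz with constant depending on $\eta$. Using the semigroup identity $\U_s \U_t = \U_{st}$ (read off the Hermite expansion, the Gaussian analogue of Fact~\ref{fact:Trho-fourier}) one computes $\Stab_\rho[\U_{1-\eta} h] = \Stab_{(1-\eta)^2 \rho}[h]$, which tends to $\Stab_\rho[h]$ as $\eta \to 0$ by continuity of $\rho \mapsto \Stab_\rho[h]$ for $L^2$ functions. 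Applying the Lipschitz case to $\U_{1-\eta} h$ and sending $\eta \to 0$ completes the proof for $0 < \rho < 1$. For $-1 < \rho < 0$ the same argument works using the reverse clause of Theorem~\ref{thm:mist}, and the mean-zero hypothesis becomes unnecessary in this regime for exactly the reason it is unnecessary in MIST.
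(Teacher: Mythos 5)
Your proposal is correct and follows essentially the same route as the paper: define $f(x_{1,1},\dots,x_{d,n}) = h\bigl(\tfrac{x_{1,1}+\cdots+x_{1,n}}{\sqrt{n}},\dots\bigr)$, apply the Majority Is Stablest Theorem (whose influence hypothesis is automatically satisfied as $n \to \infty$), and pass to the limit via the $d$-dimensional CLT. The paper only sketches this for ``nice enough'' $h$; your mollification by $\U_{1-\eta}$ and the semigroup identity $\Stab_\rho[\U_{1-\eta}h] = \Stab_{(1-\eta)^2\rho}[h]$ correctly supply the missing reduction from bounded measurable $h$ to the Lipschitz case.
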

To obtain Theorem~\ref{thm:gaussian-mist} from the Majority Is Stablest Theorem (at least for ``nice enough''~$h$), we use the fact that Gaussian random variables can be ``simulated'' by sums of many independent $\pm 1$ random bits. More precisely, we can apply Majority Is Stablest to $f \co \bits^{dn} \to [-1,1]$ defined by
\[
    f(x_{1,1}, \dots, x_{d,n}) = h\left(\tfrac{x_{1,1} + \cdots + x_{1,n}}{\sqrt{n}}, \dots, \tfrac{x_{d,1} + \cdots + x_{d,n}}{\sqrt{n}}\right)
\]
and then take $n \to \infty$ and use a $d$-dimensional Central Limit Theorem.  (The assumption and error dependence on the influence bound~$\epsilon$ disappears, because we have $\eps \to 0$ as $n \to \infty$.)  Note that in Section~\ref{sec:arrow} we saw exactly this limiting procedure in the case of $h = \sgn : \R^1 \to \{-1,1\}$ when we computed the limiting (Boolean) noise stability of~$\Maj_n$.

Theorem~\ref{thm:gaussian-mist} was first proved by Borell in 1985~\cite{Bor85}.  (In fact, Borell proved significant generalizations of the theorem, as discussed below.)  In 2005, Mossel et al.~\cite{MOO10} used it to prove the Majority Is Stablest Theorem by reducing the Boolean setting to the Gaussian setting.  The key technical tool here was a ``nonlinear'' version of the Central Limit Theorem called the \emph{Invariance Principle} (see also~\cite{Rot75}).  Briefly, the Invariance Principle implies that if $f \btR$ is a low-degree multilinear polynomial with small influences then the distributions of $f(\bx_1, \dots, \bx_n)$ and $f(\bg_1, \dots, \bg_n)$ are ``close'', where $\bx_1, \dots, \bx_n$ are independent $\pm 1$ random variables and $\bg_1, \dots, \bg_n$ are independent Gaussians.  The Invariance Principle has had many applications (e.g., in combinatorics~\cite{DFR08}, learning theory~\cite{Kan12a}, pseudorandomness~\cite{MZ10}, social choice~\cite{Mos10}, sublinear algorithms~\cite{BO10}, and the Raghavendra Theory of CSPs mentioned at the end of Section~\ref{sec:max-cut}) but we won't discuss it further here.  Instead, we'll outline in Section~\ref{sec:DMN} an alternative, ``purely discrete'' proof of the Majority Is Stablest Theorem due to De, Mossel, and Neeman~\cite{DMN13}.

Let's now look more carefully at the geometric content of Theorem~\ref{thm:gaussian-mist}. Suppose $A \subset \R^d$ is a set with \emph{Gaussian volume} $\gamma_d(A) = \half$.  Applying Theorem~\ref{thm:gaussian-mist} with $h = 1 - 2 \cdot 1_A$, and also writing $\theta = \arccos \rho \in (0,\frac{\pi}{2})$, one obtains the following:
\begin{corollary}                                       \label{cor:easy-borell}
    For $0 \leq \theta \leq \frac{\pi}{2}$ and $A \subseteq \R^d$, define the \emph{rotation sensitivity}
    \[
        \RS_A(\theta) = \Pr_{\substack{(\bz, \bz') \textnormal{ $\cos \theta$-correlated} \\ \textnormal{$d$-dimensional Gaussian pair}}}[1_A(\bz) \neq 1_A(\bz')].
    \]
    Then if $\gamma_d(A) = \half$, we have $\RS_A(\theta) \geq \frac{\theta}{\pi}$.
%    \[
%        \Pr_{\substack{(\bz, \bz') \textnormal{ $\cos \theta$-correlated} \\ \textnormal{ $d$-dimensional Gaussian pair}}}[1_A(\bz) \neq 1_A(\bz')] \geq \tfrac{\theta}{\pi}.
%    \]
\end{corollary}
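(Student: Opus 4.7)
The plan is to apply Theorem~\ref{thm:gaussian-mist} directly to the $\pm 1$-valued function $h = 1 - 2 \cdot 1_A$, exactly as the paragraph immediately before the statement already suggests. Since $h$ disagrees on $(\bz,\bz')$ precisely when $1_A$ does, the Gaussian noise stability of $h$ at $\rho = \cos\theta$ will convert by a single algebraic manipulation into a bound on $\RS_A(\theta)$.

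First I would verify the hypotheses of Theorem~\ref{thm:gaussian-mist}. The function $h$ takes values in $\{-1,+1\} \subseteq [-1,1]$, and the assumption $\gamma_d(A) = \tfrac{1}{2}$ gives
\[
    \E_{\bz \sim \gamma_d}[h(\bz)] \;=\; 1 - 2\gamma_d(A) \;=\; 0,
\]
so $h$ is admissible. Next, because $h$ is $\pm 1$-valued, the same calculation recorded at the end of the definition of $\Stab_\rho$ in Section~\ref{sec:arrow} (now in the Gaussian setting) yields
\[
    \Stab_\rho[h] \;=\; 1 - 2\Pr[h(\bz) \neq h(\bz')] \;=\; 1 - 2\,\RS_A(\theta),
\]
using the identification $\{h(\bz) \neq h(\bz')\} = \{1_A(\bz) \neq 1_A(\bz')\}$ and $\rho = \cos\theta$. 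Inserting this into Theorem~\ref{thm:gaussian-mist} together with $\arccos\rho = \theta$ produces $1 - 2\,\RS_A(\theta) \leq 1 - \tfrac{2\theta}{\pi}$, which rearranges to the desired $\RS_A(\theta) \geq \theta/\pi$.

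Theorem~\ref{thm:gaussian-mist} is stated only for $0 < \rho < 1$, i.e.\ $\theta \in (0,\tfrac{\pi}{2})$, so I would close by treating the two endpoints separately. At $\theta = 0$ one has $\bz = \bz'$, so $\RS_A(0) = 0$ and the inequality is trivial. At $\theta = \tfrac{\pi}{2}$, the vectors $\bz$ and $\bz'$ are independent, giving
\[
    \RS_A\!\left(\tfrac{\pi}{2}\right) \;=\; 2\gamma_d(A)\bigl(1 - \gamma_d(A)\bigr) \;=\; \tfrac{1}{2} \;=\; \tfrac{(\pi/2)}{\pi},
\]
so equality holds (alternatively, this case follows by continuity of both sides in $\theta$). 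There is no real obstacle in the argument: essentially all the content sits inside Theorem~\ref{thm:gaussian-mist}, and the corollary is a pure translation between the function/noise-stability language and the set/rotation-sensitivity language.
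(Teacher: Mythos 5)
Your proof is correct and is essentially the paper's own derivation: the paragraph immediately preceding the corollary obtains it by applying Theorem~\ref{thm:gaussian-mist} to $h = 1 - 2\cdot 1_A$ with $\rho = \cos\theta$, exactly as you do, and your explicit check of the endpoints $\theta \in \{0, \tfrac{\pi}{2}\}$ is a harmless (indeed welcome) addition. Note that the displayed proof environment in the paper is a separate, elementary union-bound argument establishing the corollary only for $\theta = \frac{\pi}{2\ell}$ without invoking Theorem~\ref{thm:gaussian-mist}, so it is not the route you were expected to reproduce for the general statement.
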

By Sheppard's Formula, equality is obtained if $d = 1$ and $A = (-\infty, 0]$. In fact, by rotational symmetry of correlated Gaussians, equality is obtained when~$A$ is any halfspace through the origin in~$\R^d$.  (Geometrically, it's natural to guess that halfspaces minimize $\RS_A(\theta)$ among sets~$A$ of fixed Gaussian volume, using the intuition from Remark~\ref{rem:sphere-idea}.) % ugh
As shown in~\cite{KO12}, this corollary is quite easy to prove for ``many'' values of~$\theta$:
\begin{proof}[Proof of Corollary~\ref{cor:easy-borell} for $\theta = \frac{\pi}{2\ell}$, $\ell \in \Z^+$]  %For $0 \leq j \leq \ell$, let $\vec{u}_j$ be the unit vector in $\R^2$ making an angle of $j\theta$ with the $x$-axis.   Let $\vec{\bg}^{(1)}, \dots, \vec{\bg}^{(d)}$ be a sequence of independent standard $2$-dimensional Gaussians.  Then if we define $\bz_j = \la \vec{u}_j, \bg^{(j)} \ra$ for $0 \leq j \leq \ell$, the se
Let $\bg, \bg'$ be independent {$d$-dimensional} Gaussians and define $\bz^{(j)} = \cos(j\theta) \bg + \sin(j\theta) \bg'$ for $0 \leq j \leq \ell$.  Then it's easy to check that $(\bz^{(i)}, \bz^{(j)})$ is a $\cos((j-i)\theta)$-correlated Gaussian pair. In particular, $\bz^{(0)}$ and $\bz^{(\ell)}$ are independent.  Now using $\gamma_d(A) = \half$ and a union bound we get
\[
    \half = \Pr[1_A(\bz^{(0)}) \neq 1_A(\bz^{(\ell)})] \leq \sum_{j = 1}^\ell \Pr[1_A(\bz^{(j-1)}) \neq 1_A(\bz^{(j)})] = \ell \cdot \RS_{A}(\theta),
\]
which is the desired inequality.
\end{proof}
%???Point out that, combined with Invariance, this gives UG-hardness of Max-Cut for nice params

Returning to Theorem~\ref{thm:gaussian-mist}, it states that if $(\bz, \bz')$ are $\rho$-correlated $d$-dimensional Gaussians ($0 < \rho < 1$) then halfspaces are the volume-$\half$ sets which maximize $\Pr[\bz, \bz' \in A]$.  In fact, halfspaces are also the optimizers at \emph{any} fixed volume.  Furthermore, if we generalize by looking for sets $A, B$ of fixed volume maximizing $\Pr[\bz \in A, \bz' \in B]$, parallel halfspaces are again best.  These isoperimetric facts (and more) were all originally proved by Borell~\cite{Bor85}:
\begin{theorem}[``Borell Isoperimetric Inequality'']                                     \label{thm:borell}
    Fix $0 < \rho < 1$ and ${0 \leq \alpha, \beta \leq 1}$.  Suppose $A, B \subseteq \R^d$ satisfy $\gamma_d(A) = \alpha$, $\gamma_d(B) = \beta$.  Then if $(\bz, \bz')$ is a $\rho$-correlated $d$-dimensional Gaussian pair,
    \[
        \Pr[\bz \in A, \bz' \in B] \leq \Pr[\bz \in H, \bz' \in H']
    \]
    where $H$ and $H'$ are (any) parallel halfspaces satisfying $\gamma_d(H) = \alpha$, $\gamma_d(H') = \beta$.      (If $-1 < \rho < 0$ then the inequality is reversed.) By rotational symmetry we may assume $H = (-\infty, \Phi^{-1}(\alpha)],\ H' = (-\infty, \Phi^{-1}(\beta)] \subseteq \R$ and thus write the above as
    \[
        \Pr[\bz \in A, \bz' \in B] \leq \Lambda_\rho(\alpha,\beta) \coloneqq \Pr_{\substack{(\bw, \bw') \textnormal{ $\rho$-correlated} \\ \textnormal{Gaussians}}}[\bw \leq \Phi^{-1}(\alpha), \bw' \leq \Phi^{-1}(\beta)].
    \]
    In case $\alpha = \beta = \frac12$, Sheppard's Formula implies
    \[
        \Pr[\bz \in A, \bz' \in B] \leq \Lambda_\rho(\tfrac12,\tfrac12) = \tfrac12 - \tfrac{1}{2\pi} \arccos \rho.
    \]
\end{theorem}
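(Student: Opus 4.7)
The plan is to derive Borell's inequality by passing from the Gaussian setting to the Boolean hypercube, then proving a two-function analog of Majority Is Stablest by induction, which in turn reduces to a deterministic Hessian condition on the function $\Lambda_\rho$. Writing $f = 1_A$, $g = 1_B$, the target becomes $\E[f(\bz)g(\bz')] \leq \Lambda_\rho(\E f, \E g)$ for $(\bz,\bz')$ a $\rho$-correlated $d$-dimensional Gaussian pair, and by a routine mollification plus monotone-class approximation it suffices to handle bounded continuous $f,g\co\R^d\to[0,1]$.

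First I would reduce to a Boolean statement via the same CLT device used to derive Theorem~\ref{thm:gaussian-mist} from Theorem~\ref{thm:mist}. Simulating each Gaussian coordinate by a normalized sum of $n$ independent Rademachers, set
\[
F(x_{1,1},\dots,x_{d,n}) = f\Bigl(\tfrac{x_{1,1}+\cdots+x_{1,n}}{\sqrt n},\dots,\tfrac{x_{d,1}+\cdots+x_{d,n}}{\sqrt n}\Bigr),
\]
and define $G$ analogously from $g$. All influences of $F,G$ on $\bits^{dn}$ are then $O(1/n)$, so by the multidimensional CLT (or an Invariance Principle) it suffices to prove the following Boolean two-function claim: for all $F,G\co\bits^N\to[0,1]$ with $\max_i\max\{\Inf_i[F],\Inf_i[G]\}\leq\eps$,
\[
\E_{(\bx,\by)\ \rho\text{-correlated}}[F(\bx)G(\by)] \leq \Lambda_\rho(\E F,\E G) + o_\eps(1).
\]

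Next I would prove this Boolean claim by induction on $N$, mimicking the De--Mossel--Neeman strategy promised in Section~\ref{sec:DMN}. For the step from $N-1$ to $N$, decompose $F(x)=\tfrac{1+x_N}{2}F_{+}(x')+\tfrac{1-x_N}{2}F_{-}(x')$ and similarly $G=\tfrac{1+y_N}{2}G_{+}+\tfrac{1-y_N}{2}G_{-}$, where $x'=(x_1,\dots,x_{N-1})$. Conditioning on $(\bx_N,\by_N)$, whose joint distribution puts mass $\tfrac{1+\rho}{4}$ on each of $(+,+),(-,-)$ and $\tfrac{1-\rho}{4}$ on each of $(+,-),(-,+)$, splits $\E[F(\bx)G(\by)]$ into four $\rho$-correlated expectations on $N-1$ bits; the inductive hypothesis bounds each by the appropriate $\Lambda_\rho$. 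Writing $a_\epsilon=\E F_\epsilon$ and $b_\eta=\E G_\eta$, closing the induction reduces to the deterministic \emph{two-point inequality}
\[
\tfrac{1+\rho}{4}\bigl[\Lambda_\rho(a_+,b_+) + \Lambda_\rho(a_-,b_-)\bigr] + \tfrac{1-\rho}{4}\bigl[\Lambda_\rho(a_+,b_-) + \Lambda_\rho(a_-,b_+)\bigr] \leq \Lambda_\rho\bigl(\tfrac{a_++a_-}{2},\tfrac{b_++b_-}{2}\bigr)
\]
for all $a_\pm,b_\pm\in[0,1]$.

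The main obstacle will be this final two-point inequality, which is the analytic heart of the argument. Taylor expanding to second order around the midpoint with $s=\tfrac{a_+-a_-}{2}$, $t=\tfrac{b_+-b_-}{2}$, one finds that it is equivalent to the pointwise condition
\[
\tfrac{1}{2}s^2\,\partial_\alpha^2\Lambda_\rho(\alpha,\beta) + \rho\, st\,\partial_\alpha\partial_\beta\Lambda_\rho(\alpha,\beta) + \tfrac{1}{2}t^2\,\partial_\beta^2\Lambda_\rho(\alpha,\beta) \leq 0
\]
holding for every $(\alpha,\beta)\in(0,1)^2$ and every $s,t\in\R$. This amounts to showing separate concavity in each variable, $\partial_\alpha^2\Lambda_\rho,\partial_\beta^2\Lambda_\rho \leq 0$, together with the determinantal bound $\partial_\alpha^2\Lambda_\rho\cdot\partial_\beta^2\Lambda_\rho \geq \rho^2(\partial_\alpha\partial_\beta\Lambda_\rho)^2$. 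I would verify these by direct computation from the closed form
\[
\Lambda_\rho(\alpha,\beta) = \int_{-\infty}^{\Phi^{-1}(\alpha)}\Phi\!\left(\tfrac{\Phi^{-1}(\beta)-\rho u}{\sqrt{1-\rho^2}}\right)\vphi(u)\,du,
\]
leveraging the classical heat-type identity $\partial_\rho\Lambda_\rho=\partial_\alpha\partial_\beta\Lambda_\rho$ to express the required derivatives compactly. The finite two-point inequality then follows from its infinitesimal counterpart by integrating the Hessian along line segments in $(0,1)^2$, and the case $-1<\rho<0$ is handled by the same computations with appropriate sign reversal, noting that the cross term carries a single factor of $\rho$ so that flipping its sign matches the flipped direction of the target inequality.
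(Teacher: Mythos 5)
Your overall architecture is the same as the paper's (the De--Mossel--Neeman route: discretize via the CLT, induct on coordinates, reduce to a pointwise condition on $\Lambda_\rho$), but there is a genuine gap at the crux of the argument: the finite two-point inequality you reduce to is \emph{false}. Take $a_+=b_+=1$, $a_-=b_-=0$ (the ``dictator'' restriction data). Your left-hand side is $\tfrac{1+\rho}{4}$, your right-hand side is $\Lambda_\rho(\tfrac12,\tfrac12)=\tfrac12-\tfrac{1}{2\pi}\arccos\rho$, and for example at $\rho=\tfrac12$ this reads $0.375\le \tfrac13$. It has to fail: if it held, your induction would prove the two-function Majority Is Stablest bound with no influence hypothesis and no error term, contradicting the dictator counterexample to Theorem~\ref{thm:mist}. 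The specific misstep is the claim that the finite inequality ``follows from its infinitesimal counterpart by integrating the Hessian along line segments.'' The infinitesimal statement you correctly identify is negative semidefiniteness of the $\rho$-modified Hessian $H_\rho\Lambda_\rho=\bigl[\begin{smallmatrix}1&\rho\\ \rho&1\end{smallmatrix}\bigr]\circ H\Lambda_\rho$ (the cross term carries the extra factor $\rho$ coming from $\E[\bx_N\by_N]=\rho$). But if you set $\Psi(u)$ equal to your weighted four-point average at scale $u$, then $\Psi''(u)$ for $u>0$ involves the \emph{true} Hessian of $\Lambda_\rho$ evaluated at four distinct points with signs that no longer recombine into $H_\rho\Lambda_\rho$ at a single point; only at $u=0$ does the cancellation produce the $\rho$-weighted cross term. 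So the segment-integration argument does not go through, and indeed cannot.

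The repair — and this is exactly what De, Mossel, and Neeman do (Theorem~\ref{thm:dmn}) — is to accept that the one-bit inequality holds only up to a third-order error, and to carry that cubic error through the induction: one obtains $\E[\Lambda_\rho(F(\bx),G(\by))]\le \Lambda_\rho(\E F,\E G)+O_{\rho,\eta}(1)\sum_i(\|\mathrm{d}_iF\|_3^3+\|\mathrm{d}_iG\|_3^3)$, where $\mathrm{d}_i$ is the $i$th martingale difference (and one works with $\E[\Lambda_\rho(F,G)]$ rather than $\E[FG]$ on the left so that the induction closes; the two agree for indicators). Your CLT discretization then saves the day quantitatively: each of the $dn$ martingale differences of $F$ is $O(1/\sqrt n)$, so the accumulated cubic error is $O(dn\cdot n^{-3/2})=O(d/\sqrt n)\to 0$, which yields Borell's inequality in the limit. (A truncation to $[\eta,1-\eta]$ with $\eta\to 0$ is also needed, since the constant in front of the error blows up at the boundary.) Your Hessian computations themselves — separate concavity plus $\partial_\alpha^2\Lambda_\rho\cdot\partial_\beta^2\Lambda_\rho\ge\rho^2(\partial_\alpha\partial_\beta\Lambda_\rho)^2$, which in fact holds with equality since $\det(H_\rho\Lambda_\rho)\equiv 0$ — are the right infinitesimal input; what is missing is the bookkeeping of the non-infinitesimal error, which is precisely where the small-increment structure enters and where the (false) unrestricted Boolean statement is distinguished from the (true) Gaussian one.
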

Borell's original proof of this theorem used the Gaussian symmetrization method due to Ehrhard~\cite{Ehr83} and was quite technical.  Four other proofs are known.  Beckner~\cite{Bec92} pointed out that the analogous isoperimetric inequality on the sphere is easy to prove by two-point symmetrization~\cite{BT76}, and the Gaussian result can then be deduced via ``Poincar\'{e}'s limit'' (see~\cite{CL90}).  Mossel and Neeman~\cite{MN12} recently gave a slick proof using semigroup methods, and together with De~\cite{DMN13} they gave another proof via Boolean functions. Finally, Eldan~\cite{Eld13} gave the most recent new proof, using stochastic calculus.

We will describe De, Mossel, and Neeman's Boolean proof of Borell's Isoperimetric Inequality in Section~\ref{sec:DMN}.  It has the advantage that it can be used to prove the Majority Is Stablest Theorem ``at the same time'' (using a few technical tricks from the original Invariance Principle-based proof, including \emph{hypercontractivity}).  But first, we'll spend some time discussing further special cases of Borell's Isoperimetric Inequality.

\section{Hypercontractivity}                \label{sec:hypercon}

Borell's Isoperimetric Inequality is very precise, giving the exact maximal value of $\Pr[\bz \in A, \bz' \in B]$ (when $(\bz,\bz')$ are $\rho$-correlated) for any fixed Gaussian volumes $\gamma_d(A) = \alpha$, $\gamma_d(B) = \beta$.  A small downside is that this maximum value, $\Lambda_\rho(\alpha, \beta)$, does not have a nice closed-form expression except when $\alpha = \beta = \half$.  In the interesting regime of $\alpha, \beta \to 0$, however, we can get a closed form for its asymptotics.  Let's do a rough ``heuristic'' estimation.

Suppose $H, H'$ are parallel halfspaces of ``small'' Gaussian volume $\alpha, \beta$, with $\alpha \leq \beta$.  By rotational symmetry we can assume $H = [a, \infty), H' = [b, \infty) \subset \R$ for some ``large'' values $a \geq b > 0$.  Precisely, we have $a = -\Phi^{-1}(\alpha)$, but speaking roughly we'll express this as $\alpha \approx \exp(-\frac{a^2}{2})$, as this is asymptotically correct up to lower-order factors.  Similarly we'll write $\beta \approx \exp(-\frac{b^2}{2})$.  We are interested in estimating $\Pr[\bg \in H, \bg' \in H']$, where $(\bg, \bg')$ are a $\rho$-correlated Gaussian pair.  We'll actually take $\bg' = \rho \bg + \sqrt{1-\rho^2} \bh$, where $\bh$ is a standard Gaussian independent of~$\bg$.  To start the estimation, by definition we have $\Pr[\bg \in H] \approx \exp(-\frac{a^2}{2})$.  Further, conditioned on $\bg \in H$ we will almost surely have that $\bg$ is only ``barely'' larger than~$a$.  Thus we expect $\bg'$ to be conditionally distributed roughly as $\rho a + \sqrt{1-\rho^2} \bh$. In this case, $\bg'$ will be in $H'$ if and only if $\bh \geq (b - \rho a)/\sqrt{1-\rho^2}$. Under the assumption that $b - \rho a \geq 0$, the probability of this is, roughly again, $\exp(-\frac{(b-\rho a)^2}{2 (1-\rho^2)})$.  All in all, these calculations ``suggest'' that
\[
    \Lambda_\rho(\alpha, \beta) = \Pr[\bg \in H, \bg' \in H'] \approx \exp(-\tfrac{a^2}{2})\exp(-\tfrac{(b-\rho a)^2}{2 (1-\rho^2)}) = \exp\left(-\tfrac12 \tfrac{a^2 - 2\rho a b + b^2}{1-\rho^2}\right)
\]
(under the assumption that $\alpha \approx \exp(-\frac{a^2}{2}) \leq \exp(-\frac{b^2}{2}) \approx \beta$ are ``small'', with $b \geq \rho a$).  Since Borell's Isoperimetric Inequality tells us that parallel halfspaces are maximizers, we might optimistically guess the following:
\begin{theorem}[``Gaussian Small-Set Expansion Theorem'']                            \label{thm:gaussian-sse}
    Let $0 < \rho < 1$.  Let $A, B \subseteq \R^d$ have Gaussian volumes $\exp(-\frac{a^2}{2}), \exp(-\frac{b^2}{2})$, respectively, and assume $0 \leq \rho a \leq b \leq a$.  Then
    \[
        \Pr_{\substack{(\bz, \bz') \textnormal{ $\rho$-correlated} \\ \textnormal{$d$-dimensional Gaussian pair}}}[\bz \in A, \bz' \in B] \leq \exp\left(-\tfrac12 \tfrac{a^2 - 2\rho a b + b^2}{1-\rho^2}\right).
    \]
    In particular, if $A \subseteq \R^d$ has $\gamma_d(A) = \alpha$ then
    \begin{equation}                                \label{eqn:g-sse}
        \Stab_\rho[1_A] \leq \alpha^{\frac{2}{1+\rho}} \iff \Pr_{\substack{(\bz, \bz') \textnormal{ $\rho$-correlated} \\ \textnormal{$d$-dimensional Gaussian pair}}}[\bz' \in A \mid \bz \in A] \leq \alpha^{\frac{1-\rho}{1+\rho}}.
    \end{equation}
\end{theorem}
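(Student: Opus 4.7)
The plan is to bypass Borell's Isoperimetric Inequality and derive the bound from the Gaussian Hypercontractive Inequality (GHCI), which is the theme of this section. A direct halfspace reduction via Borell would give the sharp value $\Lambda_\rho(\alpha,\beta)$, where $\alpha=e^{-a^2/2}$ and $\beta=e^{-b^2/2}$, but extracting the stated closed-form exponential upper bound is more naturally achieved through $L^p$ methods.

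First, I rewrite the correlated probability as a Gaussian inner product and apply H\"older's inequality with conjugate exponents $q'=q/(q-1)$ and $q$, where $q\ge 1$ will be optimized:
\[
\Pr[\bz\in A,\,\bz'\in B] \;=\; \langle 1_A,\,\U_\rho 1_B\rangle \;\leq\; \|1_A\|_{q'}\,\|\U_\rho 1_B\|_q.
\]
Then I invoke GHCI, $\|\U_\rho f\|_q \leq \|f\|_p$ whenever $1\le p\le q$ and $\rho\le\sqrt{(p-1)/(q-1)}$, choosing $p=1+\rho^2(q-1)$ (the smallest admissible value, which minimizes $\|1_B\|_p=\beta^{1/p}$ since $\beta\le 1$). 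This yields
\[
\Pr[\bz\in A,\,\bz'\in B] \;\leq\; \alpha^{(q-1)/q}\,\beta^{1/(1+\rho^2(q-1))}\qquad\text{for every }q\ge 1.
\]

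Second, I optimize in $q$. Setting $u=q-1\ge 0$ and substituting $\alpha=e^{-a^2/2}$, $\beta=e^{-b^2/2}$, the log of the bound is $-\tfrac{ua^2}{2(1+u)}-\tfrac{b^2}{2(1+\rho^2 u)}$. Differentiation produces the critical point
\[
u^* \;=\; \frac{a-\rho b}{\rho(b-\rho a)},
\]
which is nonnegative precisely under the hypothesis $0\le\rho a\le b\le a$ (with $u^*\to\infty$ at the boundary $b=\rho a$ and $u^*=1/\rho$ at $b=a$). Using the identities $1+\rho^2 u^* = b(1-\rho^2)/(b-\rho a)$ and $1+u^* = a(1-\rho^2)/[\rho(b-\rho a)]$, the two terms simplify to $\tfrac{a^2-\rho ab}{2(1-\rho^2)}$ and $\tfrac{b^2-\rho ab}{2(1-\rho^2)}$, which sum to the claimed exponent $\tfrac{1}{2}(a^2-2\rho ab+b^2)/(1-\rho^2)$.

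Finally, the displayed corollary is the special case $A=B$, $a=b$: the exponent reduces to $a^2/(1+\rho)=-\tfrac{2}{1+\rho}\log\alpha$, giving $\Stab_\rho[1_A]\leq\alpha^{2/(1+\rho)}$. More cleanly, this corollary follows in one line from the semigroup identity $\U_\rho=\U_{\sqrt\rho}\U_{\sqrt\rho}$: $\Stab_\rho[1_A]=\|\U_{\sqrt\rho}1_A\|_2^2\leq\|1_A\|_{1+\rho}^2=\alpha^{2/(1+\rho)}$, via GHCI with $p=1+\rho$, $q=2$ (noting $\sqrt\rho=\sqrt{(p-1)/(q-1)}$). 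The main obstacle is the optimization in step two: the algebra is elementary but multi-step, and one must verify that the critical point $u^*$ is nonnegative exactly in the assumed range $\rho a\le b\le a$, so that the interior minimum is attained and the clean quadratic form emerges.
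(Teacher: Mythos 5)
Your proof is correct and is essentially the paper's own route: the paper deduces small-set expansion by ``applying the Hypercontractive Inequality with $f = 1_A$, $g = 1_B$ and optimizing the choice of $r,s$,'' and your H\"older-plus-$\|\U_\rho f\|_q \leq \|f\|_p$ argument with $p = 1+\rho^2(q-1)$ is exactly the two-function form with $\sqrt{rs}=\rho$, carried out in the Gaussian setting (which the paper notes follows from the Boolean one). The optimization at $u^*$ and the resulting exponent check out, and one may even skip verifying that $u^*$ is a maximizer, since the bound holds for every admissible $q$.
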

Indeed this theorem is correct, and it can be formally deduced from Borell's Isoperimetric Inequality.  We'll outline a more direct proof shortly, but first let's discuss its content.  The one-set statement~\eqref{eqn:g-sse} says that if~$A$ is any ``small'' subset of Gaussian space (think of $\alpha$ as tending to~$0$) and $\rho$ is bounded away from~$1$ (say $\rho = 1 - \delta$), then  a $\rho$-noisy copy of a random point in~$A$ will almost certainly (i.e., except with probability $\alpha^{\delta/(2+\delta)}$) be outside~$A$. %interpret in terms of Ornstein-Uhlenbeck process?

One might ask whether a similar statement is true for subsets of the discrete cube~$\bn$.  As we saw with Majority Is Stablest implying Theorem~\ref{thm:gaussian-mist}, isoperimetric inequalities on the discrete cube typically imply the analogous statement in Gaussian space, by the Central Limit Theorem.  On the other hand, the converse does not generally hold; this is because there are subsets of $\bn$ like the dictators $\{x : x_i = 1\}$, or more generally ``subcubes'' ${\{x : x_{i_1} = \cdots = x_{i_k} = 1\}}$, which have no analogue in Gaussian space.  In particular, one has to rule out dictators using the ``small-influences'' condition in order for the Boolean analogue of Borell's theorem, namely the Majority Is Stablest Theorem, to be true.  However it \emph{is} often true that asymptotic isoperimetric inequalities for ``small'' subsets of Gaussian space also hold in the Boolean setting with no influences assumption; this is because \emph{small} subcubes and \emph{small} Hamming balls (the Boolean analogue of Gaussian halfspaces) have similar isoperimetric properties in $\bn$.  In particular, it turns out that Theorem~\ref{thm:gaussian-sse} holds identically in $\bn$:
\begin{theorem}[``Boolean Small-Set Expansion Theorem'']                            \label{thm:boolean-sse}
    Let $0 < \rho < 1$.  Let $A, B \subseteq \bn$ have volumes $\frac{|A|}{2^n} = \exp(-\frac{a^2}{2})$, $\frac{|B|}{2^n} = \exp(-\frac{b^2}{2})$, and assume $0 \leq \rho a \leq b \leq a$.  Then
    \[
        \Pr_{\substack{(\bx, \by) \textnormal{ $\rho$-correlated strings}}}[\bx \in A, \by \in B] \leq \exp\left(-\tfrac12 \tfrac{a^2 - 2\rho a b + b^2}{1-\rho^2}\right).
    \]
    In particular, if $\frac{|A|}{2^n} = \alpha$ then
    \begin{equation}                                \label{eqn:b-sse}
        \Stab_\rho[1_A] \leq \alpha^{\frac{2}{1+\rho}} \iff \Pr_{\substack{(\bx, \by) \textnormal{ $\rho$-correlated strings} \\ }}[\bx \in A \mid \by \in A] \leq \alpha^{\frac{1-\rho}{1+\rho}}.
    \end{equation}
\end{theorem}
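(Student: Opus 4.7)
The plan is to deduce both inequalities from the \emph{two-function Bonami Hypercontractive Inequality}: for $0 \leq \rho \leq 1$ and exponents $p_1, p_2 \geq 1$ satisfying $(p_1-1)(p_2-1) \geq \rho^2$,
\[
    \E_{(\bx,\by)\text{ $\rho$-correlated}}[f(\bx)g(\by)] \leq \|f\|_{p_1}\,\|g\|_{p_2}
\]
holds for all $f, g \btR$. This is equivalent by duality to the familiar one-function form $\|\T_\rho f\|_{p_2'} \leq \|f\|_{p_1}$ (with $p_2'$ conjugate to $p_2$), and is proved by induction on~$n$ after reducing to the single-bit case.

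Applied to $f = 1_A$ and $g = 1_B$, it immediately gives
\[
    \Pr[\bx \in A, \by \in B] \leq \alpha^{1/p_1} \beta^{1/p_2} = \exp\!\left(-\tfrac12\bigl(\tfrac{a^2}{p_1} + \tfrac{b^2}{p_2}\bigr)\right),
\]
so it remains to choose $(p_1, p_2)$ in the feasible region to minimize the exponent. Since the objective is strictly decreasing in each coordinate, the optimum lies on the boundary $(p_1-1)(p_2-1) = \rho^2$. A routine Lagrange-multiplier calculation produces
\[
    p_1 - 1 = \frac{\rho(b-\rho a)}{a-\rho b}, \qquad p_2 - 1 = \frac{\rho(a-\rho b)}{b-\rho a},
\]
and the hypothesis $0 \leq \rho a \leq b \leq a$ ensures both expressions are nonnegative (note $\rho b \leq \rho a \leq a$ gives $a - \rho b \geq 0$). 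Back-substitution telescopes cleanly to
\[
    \frac{a^2}{p_1} + \frac{b^2}{p_2} = \frac{a(a - \rho b) + b(b - \rho a)}{1-\rho^2} = \frac{a^2 - 2\rho a b + b^2}{1-\rho^2},
\]
which proves the first inequality. The one-set estimate~\eqref{eqn:b-sse} is then the special case $B = A$: setting $a=b$ above gives $\exp(-a^2/(1+\rho)) = \alpha^{2/(1+\rho)}$, and the conditional-probability reformulation follows by dividing by $\Pr[\by \in A] = \alpha$.

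The main obstacle is not the optimization, which is routine, but the two-function hypercontractive inequality itself. Its proof tensorizes cleanly over the coordinates of $\bn$, so everything ultimately rests on a sharp two-variable inequality for $f, g \co \bits \to \R$ --- a classical but nontrivial estimate whose sharpness along the curve $(p_1-1)(p_2-1) = \rho^2$ is exactly what makes the final exponent $(a^2 - 2\rho a b + b^2)/(1-\rho^2)$ tight. Once this single-bit case is reduced to checking nonnegativity of a one-variable function of $t = b_1/b_0$, it can be dispatched by a short Taylor-expansion or convexity argument; all of the heavy lifting is concentrated there.
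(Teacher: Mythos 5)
Your proposal is correct and follows exactly the route the paper indicates: apply the two-function Hypercontractive Inequality (Theorem~\ref{thm:hypercon}) to $f=1_A$, $g=1_B$ and optimize the exponents along the critical curve $rs=\rho^2$; your explicit choice $r=\rho(b-\rho a)/(a-\rho b)$, $s=\rho(a-\rho b)/(b-\rho a)$ is admissible under the hypothesis $0\leq\rho a\leq b\leq a$ and the back-substitution to $(a^2-2\rho ab+b^2)/(1-\rho^2)$ checks out. The paper leaves this optimization as a one-line remark, so you have simply filled in the same calculation it omits.
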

This theorem is formally stronger than its Gaussian counterpart Theorem~\ref{thm:gaussian-sse}, by virtue of the Central Limit Theorem.  In fact, there is a related \emph{functional inequality} which is even stronger; this is the crucial \emph{Hypercontractive Inequality} first proved by Bonami~\cite{Bon70}.
\begin{theorem}[``Boolean Hypercontractive Inequality'']                                \label{thm:hypercon}
    Let $f, g \co \bn \to \R$, let $r, s \geq 0$, and assume $0 \leq \rho \leq \sqrt{rs} \leq 1$.  Then
    \[
        \E_{\substack{(\bx, \by) \textnormal{ $\rho$-correlated}}}[f(\bx) g(\by)] \leq \|f\|_{1+r} \|g\|_{1+s}.
    \]
    (Here we are using $L^p$-norm notation, $\|f\|_p = \E_{\bx \sim \bn} [|f(\bx)|^p]^{1/p}$.)
\end{theorem}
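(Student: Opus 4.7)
My plan is to recast the statement in its contractive form and then induct on $n$, with the main effort concentrated at the base case $n = 1$. First, since $\E_{\by}[f(\bx)g(\by) \mid \bx] = f(\bx) \cdot \T_\rho g(\bx)$, Hölder's inequality with conjugate exponents $1+r$ and $q \coloneqq (1+r)/r$ shows that the stated inequality is equivalent, by duality, to the contractive bound $\|\T_\rho g\|_q \leq \|g\|_p$ with $p \coloneqq 1+s$. A short calculation shows the hypothesis $\rho \leq \sqrt{rs}$ translates into $\rho^2 \leq (p-1)/(q-1)$, and the condition $\sqrt{rs} \leq 1$ gives $p \leq q$; this is the standard Bonami--Beckner form of hypercontractivity.

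For the inductive step, I factor the operator as $\T_\rho = \T_\rho^{(1)} \otimes \cdots \otimes \T_\rho^{(n)}$, where each $\T_\rho^{(i)}$ acts only on the $i$-th bit, and set $h = (\T_\rho^{(1)} \cdots \T_\rho^{(n-1)}) g$. Conditioning on $\bx' = (\bx_1,\ldots,\bx_{n-1})$ and applying the one-dimensional case to the single-variable function $h(\bx', \cdot)$ gives
\[
    \E_{\bx_n}\bigl[|\T_\rho^{(n)} h(\bx',\bx_n)|^q\bigr] \;\leq\; \|h(\bx',\cdot)\|_p^q.
\]
Integrating over $\bx'$ and then invoking Minkowski's integral inequality (which goes the right way precisely because $p \leq q$) replaces the iterated $L^q_{\bx'}L^p_{\bx_n}$-norm on the right by an $L^p_{\bx_n}L^q_{\bx'}$-norm. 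The induction hypothesis applied to $g(\cdot,\bx_n)$ on $n-1$ variables then bounds that inner $L^q$-norm by $\|g(\cdot,\bx_n)\|_p$, and Fubini collapses the result to $\|g\|_p$, completing the induction.

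Everything therefore reduces to the two-point case, which I expect to be by far the most delicate step. Any $g\co \bits \to \R$ has the form $g(x) = c + dx$ with $\T_\rho g(x) = c + \rho d x$, so the goal becomes
\[
    \Bigl(\tfrac{(c+\rho d)^q + (c-\rho d)^q}{2}\Bigr)^{1/q} \;\leq\; \Bigl(\tfrac{(c+d)^p + (c-d)^p}{2}\Bigr)^{1/p}.
\]
By homogeneity and symmetry I would normalize $c = 1$ with $d \in [0,1]$, and since the left side is monotone increasing in $|\rho|$, I may restrict to the extremal case $\rho^2 = (p-1)/(q-1)$. The cleanest route I know is to expand both sides as power series in $d$: the constant and quadratic terms match automatically (this is exactly where the exponent relation enters), and one then needs to verify the remaining coefficient comparison. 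The main obstacle is that in the range $1 \leq p,q \leq 2$ individual binomial coefficients $\binom{p}{2k}$ and $\binom{q}{2k}$ can have opposite signs, so the inequality cannot be checked term-by-term; one must group terms appropriately, or equivalently differentiate in $\rho$ along the Ornstein--Uhlenbeck flow and reduce to the two-point log-Sobolev inequality. Either way, this is the one step where the sharp exponent condition $\rho^2 \leq (p-1)/(q-1)$ is genuinely exploited.
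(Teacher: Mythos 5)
Your proposal is correct and bottoms out in the same two-point inequality as the paper, but your inductive step is genuinely different. The paper runs the induction directly on the two-function form: writing $\bx = (\bx_1, \bx')$ and $\by = (\by_1, \by')$, it applies the inductive hypothesis to the restrictions $f_{\bx_1}, g_{\by_1}$ and then the $n=1$ case to the one-bit functions $x_1 \mapsto \|f_{x_1}\|_{1+r}$ and $y_1 \mapsto \|g_{y_1}\|_{1+s}$; this ``almost trivial'' induction needs nothing beyond the statement being proved, and its simplicity is one of the survey's main points (it parallels the inductions for Bobkov's inequality and the De--Mossel--Neeman theorem). You instead pass to the contraction form $\|\T_\rho g\|_q \leq \|g\|_p$ first and tensorize the operator, which forces you to invoke Minkowski's integral inequality and to use $p \leq q$ to make it point the right way. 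That is the classical route and it buys generality (it works verbatim for arbitrary product spaces and semigroups), but it imports an extra tool that the paper's two-function induction avoids entirely; the paper converts to the contraction form only at $n=1$, via the same H\"older duality you use at the outset. One small repair to your base case: the normalization $c = 1$, $d \in [0,1]$ needs the prior reduction to nonnegative $g$ (via $|\T_\rho g| \leq \T_\rho |g|$ and $\| |g| \|_p = \|g\|_p$); ``homogeneity and symmetry'' only give $c=1$, $d \geq 0$, and for $d > 1$ your displayed inequality is false as written since the norms require absolute values. Finally, you are right that the two-point verification is the delicate step; the paper likewise stops there, calling it ``a little bit painful (albeit elementary),'' so your proposal matches the paper's level of completeness on that point.
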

To recover Theorem~\ref{thm:boolean-sse}, one simply applies the Hypercontractive Inequality with $f = 1_A$, $g = 1_B$ and optimizes the choice of $r, s$.  (We mention that this deduction was first noted, in its ``reverse'' form, by Mossel et al.~\cite{MOR+06}.) The Gaussian analogue of the Boolean Hypercontractive Inequality also holds; indeed, the traditional proof of it (say, in~\cite{Jan97}) involves first proving the Boolean inequality and then applying the Central Limit Theorem.

Another interpretation of the Hypercontractive Inequality is as a ``generalized \Holder's inequality''.  In fact, its $\rho = 1$ case (corresponding to $\by \equiv \bx$) is \emph{identical} to \Holder's inequality (since the hypothesis $\sqrt{rs} = 1$ is identical to $(1+s)' = 1+r$).  The Hypercontractive Inequality shows that as $\bx$ and $\by$ become less and less correlated, one can put smaller and smaller norms of~$f$ and~$g$ on the right-hand side.  (In the ultimate case of $\rho = 0$, meaning $\bx$ and $\by$ are independent, one gets the trivial inequality $\E[f(\bx)g(\by)] \leq \|f\|_1 \|g\|_1$.)

Speaking of \Holder's inequality, we should mention that it can be used to show that Theorem~\ref{thm:hypercon} is equivalent to the following more traditional formulation of the Hypercontractive Inequality:
\begin{equation}                                \label{eqn:holder-hypercon}
    \text{For $f \btR$, $1 \leq p \leq q \leq \infty$: }\ \  \|\T_\rho f\|_q \leq \|f\|_p  \text{ provided $0 \leq \rho \leq \sqrt{\tfrac{p-1}{q-1}}$.}
\end{equation}
Writing $p = 1+r$, $q = 1+1/s$, one uses the fact that $\|\T_\rho f\|_q = \sup\{\E[g \cdot \T_\rho f] : {\|g\|_{q'} = 1}\}$, and that the quantity inside the $\sup$ is the same as the left-hand side in Theorem~\ref{thm:hypercon}. Here we see an explanation for the name of the inequality --- it shows that $\T_\rho$ is not just a contraction in~$L^p$ but in fact is a ``hypercontraction'' from $L^p$ to $L^q$.  In this formulation, the inequality can be viewed as quantifying the ``smoothing'' effect of the~$\T_\rho$ operator. By virtue of Fact~\ref{fact:Trho-fourier} one can use this formulation to show that low-degree polynomials of independent $\pm 1$ random variables are ``reasonable'', in the sense that their high norms are comparable to their $2$-norm.  However we won't pursue this interpretation any further here.

A wonderful fact about the Boolean Hypercontractive Inequality is that the $n = 1$ case implies the general~$n$ case by induction.  Indeed, for the two-function form given in Theorem~\ref{thm:hypercon}, the induction is almost trivial.  If $(\bx, \by)$ are $\rho$-correlated and we write $\bx = (\bx_1, \bx')$ for $\bx' \in \bits^{n-1}$ (and similarly for~$\by$), then
\[
    \E[f(\bx) g(\by)] = \E_{(\bx_1, \by_1)} \E_{(\bx', \by')}[f_{\bx_1}(\bx') g_{\by_1}(\by')] \leq \E_{(\bx_1, \by_1)} [\|f_{\bx_1}\|_{1+r} \|g_{\by_1}\|_{1+s}],
\]
by induction, where $f_{x_1}$ denotes the restriction of $f$ gotten by fixing the first coordinate to be~$x_1$ (and similarly for $g_{y_1}$).  Then defining the $1$-bit functions $F(x_1) = \|f_{x_1}\|_{1+r}$ and $G(y_1) = \|g_{y_1}\|_{1+s}$ we have
\[
     \E_{(\bx_1, \by_1)} [\|f_{\bx_1}\|_{1+r} \|g_{\by_1}\|_{1+s}] =      \E_{(\bx_1, \by_1)} [F(\bx_1)G(\by_1)] \leq \|F\|_{1+r} \|G\|_{1+s} = \|f\|_{1+r} \|g\|_{1+s},
\]
where we used the $n = 1$ case of the Hypercontractive Inequality.

Thus to prove all of the Boolean and Gaussian Hypercontractivity and Small-Set Expansion theorems, it suffices to prove the $n = 1$ case of the Boolean Hypercontractive Inequality. In fact, by the \Holder trick we just need to prove~\eqref{eqn:holder-hypercon} in the case $n = 1$. It's also easy to show that we can assume $f \co \bits \to \R$ is nonnegative, and by homogeneity we can also assume $f$ has mean~$1$.  Thus everything boils down to proving the following: If $0 \leq \rho \leq \sqrt{\frac{p-1}{q-1}} \leq 1$ and $0 \leq \delta \leq 1$ then
\begin{equation}                                \label{eqn:two-point}
    \left(\tfrac12 (1+ \rho \delta)^q +     \tfrac12 (1- \rho \delta)^q\right)^{1/q} \leq     \left(\tfrac12 (1+ \delta)^p +     \tfrac12 (1-  \delta)^p\right)^{1/p}.
\end{equation}
Note that if we think of~$\delta$ as very small and perform a Taylor expansion, the above becomes
\[
    1 + \tfrac12 \rho^2 (q-1)\delta^2 + \cdots \leq     1 + \tfrac12 (p-1)\delta^2 + \cdots.
\]
This shows that the $\rho \leq \sqrt{\frac{p-1}{q-1}}$ condition is necessary, and also that it's  ``essentially'' sufficient assuming~$\delta$ is small. However, we need to actually verify~\eqref{eqn:two-point} for all $0 \leq \delta \leq 1$.  For some simple values of~$p$ and~$q$, this is easy.  For example, if $p = 2$ and $q = 4$, establishing~\eqref{eqn:two-point} amounts to noting that $1 + 2\delta^2 + \frac19 \delta^4 \leq 1 + 2\delta^2 + \delta^4$.  This is already enough to prove, say, the Boolean Small-Set Expansion statement~\eqref{eqn:b-sse} with parameter $\rho = \frac13$.  On the other hand, establishing~\eqref{eqn:two-point} for all $p, q$ and all~$\delta$ is a little bit painful (albeit elementary).  In the next section, we'll see a similar problem where this pain can be circumvented.

\section{Bobkov's Inequality and Gaussian Isoperimetry}     \label{sec:bobkov}

Let's now look at a different special case of Borell's Isoperimetric Inequality, namely the case where $B = A$ and  $\rho \to 1^{-}$.  Using the rotation sensitivity definition from Corollary~\ref{cor:easy-borell}, Borell's inequality tells us that if $A \subseteq \R^d$, and $H \subseteq \R^d$ is a halfspace of the same Gaussian volume, then $\RS_A(\delta) \geq \RS_H(\delta)$.  Since we also have $\RS_A(0) = \RS_H(0) = 0$, it follows that $\RS_A'(0^+) \geq \RS_H'(0^+)$. (It can be shown that this derivative $\RS'_A(0^+)$ is always well-defined, though it may be~$\infty$.)  As we'll explain shortly, the derivative $\RS_A'(0^+)$ has a very simple meaning; up to a factor of~$\sqrt{\frac{\pi}{2}}$, it is the \emph{Gaussian surface area} of the set~$A$.  Thus Borell's Isoperimetric Inequality implies the following well-known result:% (first proved by Borell~\cite{Bor75} and independently by Sudakov and Tsirel'son~\cite{ST78}):
\begin{theorem}[``Gaussian Isoperimetric Inequality'']                                     \label{thm:giso}
    Let $A \subseteq \R^d$ have Gaussian volume $\gamma_d(A) = \alpha$, and let $H \subseteq \R^d$ be any halfspace with $\gamma_d(H) = \alpha$.  Then
    \begin{equation}                                \label{eqn:giso}
        \surf_d(A) \geq \surf_d(H).
    \end{equation}
\end{theorem}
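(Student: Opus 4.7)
The plan is to deduce the inequality from Borell's Isoperimetric Inequality (Theorem~\ref{thm:borell}) exactly along the lines hinted at in the paragraph preceding the statement: apply Borell with $B = A$, compare rotation sensitivities, and differentiate at $\theta = 0^+$.

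First, I would specialize Theorem~\ref{thm:borell} to $B = A$ and to the correlation parameter $\rho = \cos\theta$ with $\theta$ small. This yields
\[
    \Pr[\bz \in A, \bz' \in A] \leq \Pr[\bz \in H, \bz' \in H],
\]
where $H$ is any halfspace with $\gamma_d(H) = \gamma_d(A) = \alpha$ and $(\bz,\bz')$ is the $\cos\theta$-correlated $d$-dimensional Gaussian pair. Subtracting both sides from $\alpha$ and multiplying by~$2$ converts this to a statement about rotation sensitivities (in the sense of Corollary~\ref{cor:easy-borell}), namely $\RS_A(\theta) \geq \RS_H(\theta)$ for every $\theta \in [0,\tfrac{\pi}{2}]$.

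Next, since $\RS_A(0) = \RS_H(0) = 0$, dividing by $\theta$ and letting $\theta \to 0^+$ gives the comparison of one-sided derivatives
\[
    \RS_A'(0^+) \;\geq\; \RS_H'(0^+),
\]
where for general measurable $A$ the quantity $\RS_A'(0^+)$ is interpreted as $\liminf_{\theta \to 0^+} \RS_A(\theta)/\theta$ (possibly $+\infty$).

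The remaining and crucial step is to identify this derivative with Gaussian surface area, up to the explicit constant $\sqrt{2/\pi}$: I would show that
\[
    \RS_A'(0^+) \;=\; \sqrt{\tfrac{2}{\pi}}\,\surf_d(A),
\]
using the Minkowski-content definition $\surf_d(A) = \liminf_{\eps \to 0^+}(\gamma_d(A_\eps) - \gamma_d(A))/\eps$. Writing $\bz' = \cos\theta\,\bz + \sin\theta\,\bh$ for an independent standard Gaussian~$\bh$, for small~$\theta$ the perturbation $\bz' - \bz$ is essentially $\theta\,\bh$, so $\bz'$ escapes $A$ whenever~$\bz$ lies within Gaussian distance roughly $\theta |\la \bh, \vec n\ra|$ of the boundary (where $\vec n$ is the outer normal). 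A short computation using $\E[|\la \bh,\vec n\ra|] = \sqrt{2/\pi}$ for a unit vector $\vec n$ recovers the claimed constant. The proof of this identification is clean for halfspaces (by an explicit calculation using Sheppard's Formula, which gives $\RS_H(\theta) = \theta/\pi$ for the halfspace with $\alpha=\tfrac12$, matching $\surf_d(H) = \vphi(0) = 1/\sqrt{2\pi}$); the general case reduces to the halfspace case on the tangent plane modulo the customary care required to treat sets with irregular boundary.

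The main obstacle is precisely this last identification $\RS_A'(0^+) = \sqrt{2/\pi}\,\surf_d(A)$ for arbitrary measurable $A$: one must either restrict to sufficiently regular $A$ (e.g., $C^1$ boundary) and then extend by approximation, or adopt a definition of Gaussian surface area (such as the Minkowski content above, or a suitable De Giorgi perimeter) for which the identity holds essentially by definition. Once this is in hand, combining the three displayed inequalities yields $\surf_d(A) \geq \surf_d(H)$, which is~\eqref{eqn:giso}.
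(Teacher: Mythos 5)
Your proposal is correct and follows essentially the same route as the paper: specialize Borell's Isoperimetric Inequality to $B = A$ with $\rho = \cos\theta$, deduce $\RS_A(\theta) \geq \RS_H(\theta)$, differentiate at $\theta = 0^+$, and invoke the identification $\surf_d(A) = \sqrt{\pi/2}\cdot\RS_A'(0^+)$ from Definition~\ref{def:giso}. The only difference is cosmetic: the paper sidesteps the regularity issue you flag by simply \emph{taking} the rotation-sensitivity derivative as the definition of Gaussian surface area (justifying its agreement with Minkowski content only heuristically, for ``nice enough'' $A$), which is exactly one of the two options you propose.
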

Here we are using the following definition:
\begin{definition}                                  \label{def:giso}
    The \emph{Gaussian surface area} of $A \subseteq \R^d$ is
    \[
        \surf_d(A) = \sqrt{\frac{\pi}{2}} \cdot \RS_A'(0^+) = \lim_{\delta \to 0^+}\frac{\gamma_d((\bdry A)^{+\delta/2})}{\delta} = \E_{\bz \sim \gamma_d}[\|\grad 1_A(\bz)\|] = \int_{\bdry A} \vphi(x)\,dx.
    \]
    The first equation may be taken as the definition, and the remaining equations hold assuming $A$ is ``nice enough'' (for technical details, see~\cite{AMMP10,AFR13,Hin10,AF11,MNP12,AFR13}).
\end{definition}
To get a feel for the definition, let's ``heuristically justify'' the second equality above, which relates the derivative of rotation sensitivity to the more natural-looking Gaussian Minkowski content of $\bdry A$.  We can think of
\begin{equation}                                \label{eqn:rsa0}
    \RS_A'(0^+) = \frac{\RS_A(\delta)}{\delta} = \frac{1}{\delta} \Pr_{\substack{(\bz, \bz') \textnormal{ $\cos \delta$-correlated} \\ \textnormal{$d$-dimensional Gaussian pair}}}[1_A(\bz) \neq 1_A(\bz')]
\end{equation}
for ``infinitesimal''~$\delta$.  The last expression here can be thought of as the probability that the line segment $\bell$ joining $\bz, \bz'$ crosses $\bdry A$.  Now for infinitesimal~$\delta$ we have $\cos \delta \approx 1$ and $\sin \delta \approx \delta$; thus the distribution of $(\bz, \bz')$ is essentially that of $(\bg, \bg + \delta \bg')$ for $\bg$, $\bg'$ independent $d$-dimensional Gaussians.  When $\bg$ lands near~$\bdry A$, the length of the segment $\bell$ in the direction of the nearby unit normal~$\bv$ to~$\bdry A$ will have expectation $\E[|\la \delta \bg', \bv \ra|] = \delta \E[|\normal(0,1)|] = \sqrt{2/\pi} \cdot \delta$.  Thus~\eqref{eqn:rsa0} should essentially be~$\sqrt{2/\pi} \cdot \delta \cdot \gamma_d(\{z : \dist(z, \bdry A) < \delta\})$, completing the heuristic justification of the second inequality in Definition~\ref{def:giso}.

Incidentally, it's easy to see that the Gaussian surface area of the one-dimensional halfspace $(-\infty, a] \subseteq \R$ is $\vphi(a)$; thus we can give an explicit formula for the right-hand side of~\eqref{eqn:giso}:
\begin{fact}                                        \label{fact:giso}
    The right-hand side of~\eqref{eqn:giso} is the \emph{Gaussian isoperimetric function},
    \[
        \giso(\alpha) = \vphi \circ \Phi^{-1}(\alpha) \in [0, \tfrac{1}{\sqrt{2\pi}}].
    \]
    A remark: One  easily checks that it satisfies the differential equation ${\giso \giso'' + 1 = 0}$, with boundary conditions $\giso(0) = \giso(1) = 0$.
\end{fact}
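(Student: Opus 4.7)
The plan is to evaluate the Gaussian surface area of a halfspace directly from Definition~\ref{def:giso}, and then verify the differential equation by a short calculation involving the inverse cdf.

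First I would use the rotational symmetry of $\gamma_d$ to reduce to a canonical halfspace. Given $\alpha \in (0,1)$, set $a = \Phi^{-1}(\alpha)$ and take $H = \{x \in \R^d : x_1 \leq a\}$, so that $\gamma_d(H) = \Phi(a) = \alpha$. Then $\bdry H = \{x : x_1 = a\}$ is a hyperplane, and using the integral form $\surf_d(H) = \int_{\bdry H} \vphi(x)\,dx$ from Definition~\ref{def:giso} (with $\vphi$ interpreted as the $d$-dimensional standard Gaussian density), the product structure $\vphi_d(x) = \vphi(x_1) \prod_{i=2}^d \vphi(x_i)$ lets me factor the $(d-1)$-dimensional Hausdorff integral as
\[
    \surf_d(H) \;=\; \vphi(a) \cdot \int_{\R^{d-1}} \vphi(x_2)\cdots\vphi(x_d)\,dx_2\cdots dx_d \;=\; \vphi(a) \;=\; \vphi(\Phi^{-1}(\alpha)).
\]
For the range, note that $\Phi^{-1}$ is a bijection $(0,1) \to \R$ sending $\tfrac12 \mapsto 0$ and $\{0,1\} \mapsto \{-\infty, +\infty\}$; since $\vphi$ attains its maximum $\tfrac{1}{\sqrt{2\pi}}$ at $0$ and vanishes at $\pm\infty$, we obtain $\giso \in [0, \tfrac{1}{\sqrt{2\pi}}]$ with $\giso(0) = \giso(1) = 0$.

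To verify the differential equation, I would differentiate the identity $\Phi(\Phi^{-1}(\alpha)) = \alpha$ to get $(\Phi^{-1})'(\alpha) = 1/\vphi(\Phi^{-1}(\alpha))$. Writing $a = a(\alpha) = \Phi^{-1}(\alpha)$ and using $\vphi'(t) = -t\,\vphi(t)$, this gives
\[
    \giso'(\alpha) = \vphi'(a)\cdot a'(\alpha) = \frac{-a\,\vphi(a)}{\vphi(a)} = -a(\alpha),
\]
and hence $\giso''(\alpha) = -a'(\alpha) = -1/\vphi(a) = -1/\giso(\alpha)$, which is exactly $\giso\,\giso'' + 1 = 0$.

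There is no real obstacle here; the only subtlety is justifying that for a smooth halfspace the integral formula $\surf_d(A) = \int_{\bdry A} \vphi(x)\,dx$ in Definition~\ref{def:giso} genuinely coincides with the rotation-sensitivity definition $\sqrt{\pi/2}\cdot \RS_A'(0^+)$. For a halfspace this is easily seen directly: by the rotational-symmetry description of a $\rho$-correlated pair one reduces to dimension one, where $\RS_H(\theta) = \Pr[\sgn(\bw - a) \neq \sgn(\bw' - a)]$ for a $\cos\theta$-correlated Gaussian pair $(\bw, \bw')$, and expanding this to first order in $\theta$ (using $\bw' \approx \bw + \theta \bh$ with $\bh$ independent standard Gaussian) gives $\RS_H'(0^+) = \sqrt{2/\pi}\cdot\vphi(a)$, matching $\sqrt{2/\pi}\cdot\giso(\alpha)$ as required.
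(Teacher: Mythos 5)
Your proposal is correct and is exactly the verification the paper has in mind: it states the halfspace computation ("the Gaussian surface area of the one-dimensional halfspace $(-\infty,a]$ is $\vphi(a)$") and leaves the ODE as an easy check, and your chain-rule calculation via $(\Phi^{-1})'(\alpha)=1/\vphi(\Phi^{-1}(\alpha))$ and $\vphi'(t)=-t\,\vphi(t)$ is the standard way to carry that out. Your closing consistency check of the rotation-sensitivity definition against the boundary integral for a halfspace also matches the paper's own heuristic justification of Definition~\ref{def:giso}.
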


The Gaussian Isoperimetric Inequality was originally independently proven by Borell~\cite{Bor75} and by Sudakov and Tsirel'son~\cite{ST78}.  Both proofs deduced it via Poincar\'e's limit from L\'evy's Spherical Isoperimetric Inequality~\cite{Lev22,Sch48}.  (This is the statement that the fixed-volume subsets of a sphere's surface which minimize perimeter are caps --- i.e., intersections of the sphere with a halfspace.) Ehrhard~\cite{Ehr83} subsequently developed his Gaussian symmetrization method to give  a different proof.  In 1997, Bobkov gave a surprising new proof by the same technique we saw in the last section: establishing a functional Boolean analogue by induction.  We'll now outline this proof.

We start with the following equivalent functional form of the Gaussian Isoperimetric Inequality (first noted by Ehrhard~\cite{Ehr84}):  For locally Lipschitz ${f \co \R^d \to [0,1]}$,
\begin{equation}                                            \label{eqn:gaussian-bobkov}
    \giso(\E[f(\bz)]) \leq \E[\|(\grad f(\bz), \giso(f(\bz)))\|_2],
\end{equation}
where $\bz \sim \gamma_d$ and $\|\cdot\|_2$ denotes the usual Euclidean norm in $d+1$ dimensions.  The Gaussian Isoperimetric Inequality for~$A$ can be deduced by taking $f = 1_A$; conversely, inequality~\eqref{eqn:gaussian-bobkov} can be deduced from the Gaussian Isoperimetric Inequality by taking $A = \{(x, a) : f(x) \geq \Phi(a)\} \subseteq \R^{d+1}$.  In turn, Bobkov showed that the above inequality can be deduced (by the usual Central Limit Theorem argument) from the analogous Boolean inequality:
\begin{theorem}[``Bobkov's Inequality'']                                     \label{thm:bobkov}
    For any $f \co \bn \to [0,1]$,
    \[
         \giso(\E[f]) \leq \E[\|(\grad f, \giso(f))\|_2].
    \]
    Here the expectation is with respect to the uniform distribution on $\bn$, and $\grad f = (\D_1 f, \dots, \D_n f)$.
\end{theorem}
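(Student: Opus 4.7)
The plan is to prove Bobkov's Inequality by induction on $n$, mimicking step-for-step the strategy that worked for the Hypercontractive Inequality in the previous section: reduce the general-$n$ case to $n=1$ nearly mechanically via a convexity computation, then dispatch the one-bit ``two-point'' inequality by direct analysis. As with hypercontractivity, the one-bit case will be the genuine content.

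For the inductive step, I would write $f \co \bn \to [0,1]$ in terms of its restrictions in (say) the last coordinate, $a(x') = f(x',+1)$ and $b(x') = f(x',-1)$ for $x' \in \bits^{n-1}$, and introduce the averaged function $g(x') = \tfrac{a(x')+b(x')}{2}$. Then $\E[g] = \E[f]$ and $\grad g = \tfrac12(\grad a + \grad b)$, so applying the induction hypothesis to $g$ gives
\[
\giso(\E[f]) = \giso(\E[g]) \leq \E_{x'}\bigl[\|(\grad g(x'), \giso(g(x')))\|_2\bigr].
\]
On the other hand, since $\D_n f \equiv \tfrac{a-b}{2}$ and the remaining partials of $f$ restrict to $\D_i a$ or $\D_i b$ according to $x_n$, the conditional expectation $\E_{x_n}\bigl[\|(\grad f, \giso(f))\|_2 \mid x'\bigr]$ equals
\[
\tfrac12\sqrt{\|\grad a\|^2 + (\tfrac{a-b}{2})^2 + \giso(a)^2} + \tfrac12\sqrt{\|\grad b\|^2 + (\tfrac{a-b}{2})^2 + \giso(b)^2}.
\]
Everything finishes (upon a final $\E_{x'}[\,\cdot\,]$) as soon as I establish the corresponding pointwise bound. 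Writing $X_a = \sqrt{(\tfrac{a-b}{2})^2 + \giso(a)^2}$ and $X_b$ symmetrically, this pointwise bound rests on two ingredients: first, the triangle inequality/convexity of $\|\cdot\|_2$ yields $\bigl\|(\grad g, \tfrac{X_a+X_b}{2})\bigr\|_2 \leq \tfrac12\|(\grad a, X_a)\|_2 + \tfrac12\|(\grad b, X_b)\|_2$; second, $\|(\vec{v}, s)\|_2$ is nondecreasing in $s \geq 0$, so it suffices to show $\giso(g) \leq \tfrac12(X_a + X_b)$. But this last inequality is precisely the $n=1$ case of Bobkov's inequality applied to the two-point function taking values $a(x'), b(x')$.

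Thus the whole problem collapses to the two-point inequality
\[
\giso\bigl(\tfrac{p+q}{2}\bigr) \leq \tfrac12\sqrt{(\tfrac{p-q}{2})^2 + \giso(p)^2} + \tfrac12\sqrt{(\tfrac{p-q}{2})^2 + \giso(q)^2} \qquad \text{for all } p,q \in [0,1],
\]
and this is the main obstacle. My plan of attack is the substitution $p = \Phi(x)$, $q = \Phi(y)$, under which $\giso(p) = \vphi(x)$ and $\giso(q) = \vphi(y)$ by definition, converting the claim into a purely analytic inequality on $\R^2$ involving only $\Phi$ and $\vphi$. Fixing $x-y$ and letting $x+y$ vary (or vice versa) reduces matters to a one-variable problem, where the concavity of $\giso$ together with the ODE $\giso\,\giso'' + 1 = 0$ from the Fact above makes a derivative check tractable; boundary behavior as $|x|,|y| \to \infty$ is handled by the explicit asymptotics of $\Phi$ and $\vphi$. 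This is presumably the ``pain circumvented'' alluded to at the end of Section~\ref{sec:hypercon}: whereas the two-point hypercontractive inequality required a delicate Taylor-plus-cases argument valid for all $0 \leq \delta \leq 1$, here the special differential structure of $\giso$ lets the one-variable reduction close cleanly.
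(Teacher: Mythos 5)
Your inductive step is correct and is exactly the content of Fact~\ref{fact:bobkov-induction}: restrict on the last coordinate, apply the induction hypothesis to the average $g = \frac{a+b}{2}$, and combine the triangle inequality for $\|\cdot\|_2$ with monotonicity in the last slot and the $n=1$ case. So the reduction to the two-point inequality
\[
\giso\bigl(\tfrac{p+q}{2}\bigr) \leq \tfrac12\sqrt{\bigl(\tfrac{p-q}{2}\bigr)^2 + \giso(p)^2} + \tfrac12\sqrt{\bigl(\tfrac{p-q}{2}\bigr)^2 + \giso(q)^2}
\]
is sound. The gap is in how you then dispose of this inequality. The ODE $\giso\,\giso''+1=0$ together with concavity certifies the inequality only to second order in $\delta = \frac{|p-q|}{2}$ (this is exactly the Taylor expansion~\eqref{eqn:bobkov-taylor}); for the full range of $p,q\in[0,1]$ the substitution $p=\Phi(x)$, $q=\Phi(y)$ leaves you with $\vphi(\Phi^{-1}(\frac{\Phi(x)+\Phi(y)}{2}))$ on the left, and ``fixing $x-y$ and doing a derivative check'' is an assertion, not an argument --- this direct verification is precisely Bobkov's original ``elementary labor,'' a genuinely delicate case analysis, and nothing in your sketch shows how the differential structure closes it for large increments. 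You have also inverted the point of the remark at the end of Section~\ref{sec:hypercon}: the pain is \emph{not} circumvented by doing the direct verification more cleverly.

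The device the proof actually uses is the discrete Barthe--Maurey argument: replace the single $\pm 1$ bit by a random walk with increments $\pm\delta$ stopped upon reaching $\pm 1$, so that (by the very same induction you already wrote down, applied to the walk's increments) one only ever needs the two-point inequality for increments of size $\beta\delta$, which can be made arbitrarily small. There the Taylor expansion and the ODE \emph{do} suffice, up to an additive $o(\delta^2)$ error per step; since the walk takes $1/\delta^2$ steps in expectation, the accumulated error vanishes as $\delta\to 0$. Either supply that random-walk argument, or supply the full case analysis for the two-point inequality at all scales; as written, the central inequality of the theorem is left unproved.
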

Just as with the Hypercontractive Inequality, this inequality has the property that the $n = 1$ case implies the general~$n$ case by a fairly easy induction.  Indeed, this induction uses no special property of $\giso$ or the $2$-norm:
\begin{fact}                                        \label{fact:bobkov-induction}
    Let $J \co [0,1] \to \R^{\geq 0}$, and let $\| \cdot \|$ denote a fixed $L^p$-norm.  Consider, for $f \co \bn \to [0,1]$, the following inequality:
    \begin{equation}                            \label{eqn:bobkov1}
         J(\E[f]) \leq \E[\|(\grad f, J(f))\|].
    \end{equation}
    If this inequality holds for $n = 1$ then it holds for general~$n$.
\end{fact}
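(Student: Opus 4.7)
The plan is to induct on $n$, using the hypothesized $n=1$ case as both the base and as a key ingredient in the inductive step. Given $f \co \bn \to [0,1]$ with $n \geq 2$, I split along the last coordinate by setting $f_\pm(x') = f(x', \pm 1)$ for $x' \in \bits^{n-1}$. The strategy is to apply the one-dimensional inequality to a $1$-bit ``averaged'' version of $f$, then invoke the inductive hypothesis on $\bits^{n-1}$ separately for $f_+$ and $f_-$, and finally combine these two estimates into the desired upper bound for $f$.

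Concretely, I define $g \co \bits \to [0,1]$ by $g(x_n) = \E_{x'}[f(x', x_n)]$, so that $\E[g] = \E[f]$ and $\D g(+1) = \D g(-1) = \tfrac12(\E[f_+] - \E[f_-]) =: a$. Applying the hypothesized $n=1$ case of~\eqref{eqn:bobkov1} to $g$ yields
\[
    J(\E[f]) \;\leq\; \tfrac12\|(a, J(\E[f_+]))\| + \tfrac12\|(a, J(\E[f_-]))\|.
\]
The inductive hypothesis applied to $f_+$ (as a function on $\bits^{n-1}$) gives $J(\E[f_+]) \leq \E_{x'}[V_+(x')]$, where $V_+(x') := \|(\grad f_+(x'), J(f_+(x')))\|$, and similarly for $f_-$.

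To close the argument I combine these bounds using three standard properties. First, for $a$ fixed the map $t \mapsto \|(a,t)\|$ is monotone nondecreasing on $t \geq 0$, which lets me replace $J(\E[f_\pm])$ inside the outer norm by the larger quantity $\E_{x'}[V_\pm(x')]$. Second, since every norm is convex, Jensen's inequality gives $\|(a, \E[V_\pm])\| = \|\E_{x'}[(\D_n f(x', \pm 1), V_\pm(x'))]\| \leq \E_{x'}\|(\D_n f(x', \pm 1), V_\pm(x'))\|$; here the first equality uses the key fact that $\D_n f(x', x_n) = \tfrac12(f_+(x') - f_-(x'))$ is independent of $x_n$, so its expectation over $x'$ is exactly $a$. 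Third, the characteristic $L^p$-identity $\|(u, \|v\|_p)\|_p = \|(u, v_1, \ldots, v_k)\|_p$ flattens the nested norm, and after a harmless reordering of coordinates the integrand equals $\|(\grad f(x', \pm 1), J(f(x', \pm 1)))\|$. Averaging the $+$ and $-$ contributions reconstitutes the uniform distribution on $\bn$ and completes the induction.

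The only real obstacle is bookkeeping, and the crucial observation is that $\D_n f$ does not depend on $x_n$, so the single scalar $a$ produced by the one-dimensional step aligns correctly with the $n$-th gradient coordinate on \emph{both} halves of $\bn$. No property of $J$ beyond being nonnegative is used; what is specifically exploited about the norm is the flattening identity in step three, which is precisely why the fact is formulated for $L^p$-norms rather than for arbitrary norms.
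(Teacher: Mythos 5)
Your proof is correct, and it is essentially the induction the paper has in mind (the paper only gestures at it, citing the analogous restriction-based induction used for the Hypercontractive Inequality): split off one coordinate, apply the $n=1$ case to the averaged one-bit function and the inductive hypothesis to the two restrictions, and glue the pieces with monotonicity of $t \mapsto \|(a,t)\|$, Jensen for the norm, and the $L^p$ flattening identity. Your observation that $\D_n f$ is independent of $x_n$ (so the scalar $a$ lines up with the $n$th gradient coordinate on both halves) is exactly the point that makes the bookkeeping close.
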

Now given a norm $\| \cdot \|$ we can seek the ``largest'' function~$J$ for which~\eqref{eqn:bobkov1} holds when $n = 1$.  As an aside, for the $1$-norm $\| \cdot \|_1$ we may take $J(\alpha) = \alpha \log_2(1/\alpha)$, and this yields a form of the classic Edge Isoperimetric Inequality for the discrete cube~\cite{Har64}, sharp for all $\alpha = 2^{-k}, k \in \Z^+$.  Returning to Bobkov's Inequality, the $n = 1$ case we need to verify is that
\begin{equation}                                \label{eqn:bobkov-key}
    J(\alpha) \leq \half \sqrt{\delta^2 + J(\alpha + \delta)^2} + \half \sqrt{\delta^2 + J(\alpha - \delta)^2}
\end{equation}
when $J = \giso$ and $\alpha \pm \delta \in [0,1]$.  Bobkov used some (elementary) labor to show that this inequality indeed holds when $J = \Giso$.  To see how the Gaussian isoperimetric function arises, we Taylor-expand the right-hand side in~$\delta$, getting:
\begin{equation}                                \label{eqn:bobkov-taylor}
    J(\alpha) + \frac{1}{2J(\alpha)}(J(\alpha) J''(\alpha)+1) \delta^2 \pm O(\delta^4).
\end{equation}
Thus if take $J = \Giso$, which satisfies $\Giso \Giso'' + 1 = 0$, then the needed inequality~\eqref{eqn:bobkov-key} will at least be satisfied ``for small $\delta$, up to an additive $o(\delta^2)$''.

Perhaps surprisingly, this is enough to deduce that~\eqref{eqn:bobkov-key} holds exactly, for all~$\delta$.  This was (in a sense) first established by Barthe and Maurey, who used stochastic calculus and It\^o's Formula to prove that~\eqref{eqn:bobkov-key} holds with $J = \Giso$.  Let us present here a sketch of an elementary, discrete version of Barthe--Maurey argument.

We wish to show that Theorem~\ref{thm:bobkov} holds in the $n = 1$ case; say, for the function $f(\by) = \alpha + \beta \by$, where $\by \sim \bits$. Let's take a random walk on the line, starting from~$0$, with independent increments $\bx_1, \bx_2, \bx_3, \dots$ of $\pm \delta$, and stopping when the walk reaches $\pm 1$ (we assume $1/\delta \in \Z^+$).  We let $\by \in \{-1,1\}$ be the stopping point of this walk (which is equally likely to be~$\pm 1$).  Now proving Bobkov's inequality for $f(\by) = \alpha + \beta(\bx_1 + \bx_2 + \bx_3 + \cdots)$ can be reduced to proving Bobkov's inequality just for $f(\bx_1) = \alpha + \beta \bx_1$, essentially by the same easy induction used to derive Theorem~\ref{thm:bobkov} from its $n = 1$ case.  This puts us back in the same position as before: we need to show that
\[
    \giso(\alpha) \leq \half \sqrt{(\beta \delta)^2 + \giso(\alpha + \beta\delta)^2} + \half \sqrt{(\beta\delta)^2 + \giso(\alpha - \beta\delta)^2}.
\]
However  we now have the advantage that the quantity $\beta \delta$ is indeed ``small''; we can make it as small as we please.  By the Taylor expansion~\eqref{eqn:bobkov-taylor}, the above inequality indeed holds up to an additive $o(\delta^2)$ error.  Furthermore, if we simply let this error accumulate in the induction, it costs us almost nothing. It's well known and simple that if $\bT$ is the number of steps the random walk takes before stopping, then $\E[\bT] = 1/\delta^2$.  Thus we can afford to let an~$o(\delta^2)$ error accumulate for $1/\delta^2$ steps, since~$\delta$ can be made arbitrarily small.

The Barthe--Maurey version of the above argument replaces the random walk with Brownian motion; this is arguably more elegant, but less elementary.  An amusing aspect of all this is the following:  We first saw in Section~\ref{sec:borell} that statements about Gaussian geometry can be proven by ``simulating'' Gaussian random variables by sums of many random $\pm 1$ bits (scaled down);  the above argument shows that it can also be effective to simulate a single $\pm 1$ random bit by the sum of many small Gaussians (i.e., with Brownian motion).

We end this section by mentioning that Bobkov's approach to the Gaussian Isoperimetric Inequality inspired Bakry and Ledoux~\cite{BL96b,Led98} to give a ``semigroup proof'' of the Gaussian version of Bobkov's inequality~\eqref{eqn:gaussian-bobkov} (\'a~la~\cite{BE85a,Led94}).  Specifically, if one defines
\[
    F(\rho) = \E_{\gamma_d}[\|(\grad \U_\rho f,\ \giso(\U_\rho f))\|_2],
\]
then they showed that~$F$ is a nondecreasing function of $\rho \in [0,1]$ just by differentiation (though the computations are a bit cumbersome). This immediately implies~\eqref{eqn:gaussian-bobkov} by taking $\rho = 0, 1$.  Mossel and Neeman~\cite{MN12} proved the more general Borell Isoperimetric Inequality using a very similar semigroup technique, and Ledoux~\cite{Led13} generalized their methodology to include the Hypercontractive Inequality, Brascamp--Lieb inequalities, and some forms of the Slepian inequalities.  However, it was by returning to discrete methods --- i.e., proving a statement about Boolean functions by induction --- that De, Mossel, and Neeman~\cite{DMN13} were able to simultaneously establish the Majority Is Stablest Theorem and Borell's theorem.

\section{The De--Mossel--Neeman proof of the MIST}                      \label{sec:DMN}

Mossel and Neeman actually proved the following functional version of Borell's Isoperimetric Inequality:
\begin{theorem}                                     \label{thm:mn}
    Fix $0 < \rho < 1$ and let $f, g \co \R^d \to [0,1]$.  Then if $(\bz, \bz')$ is a $\rho$-correlated $d$-dimensional Gaussian pair,
    \begin{equation}                                \label{eqn:mn}
        \E[\Lambda_{\rho}(f(\bz),g(\bz'))] \leq \Lambda_{\rho}(\E[f(\bz)], \E[g(\bz')]).
    \end{equation}
    (If $-1 < \rho < 0$ then the inequality is reversed.)
\end{theorem}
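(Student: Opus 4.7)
The plan is to follow the De--Mossel--Neeman philosophy outlined at the end of Section~\ref{sec:bobkov}: prove the Boolean analogue of~(\ref{eqn:mn}) by induction on the number of bits, and then recover the Gaussian statement via the Central Limit Theorem. Specifically, the Boolean target is that for any $f, g \co \bn \to [0,1]$ and any $\rho$-correlated pair $(\bx, \by)$ of $n$-bit strings,
\begin{equation} \label{eqn:boolean-mn-plan}
    \E[\Lambda_\rho(f(\bx), g(\by))] \leq \Lambda_\rho(\E[f(\bx)], \E[g(\by)]).
\end{equation}
Given~(\ref{eqn:boolean-mn-plan}), Theorem~\ref{thm:mn} follows (for ``nice enough'' $f, g$) by simulating each Gaussian coordinate of $(\bz, \bz')$ as a normalized sum of many independent correlated $\pm 1$ bits and applying the multidimensional CLT, together with a standard approximation argument to handle the mild blow-up of the derivatives of $\Lambda_\rho$ near the boundary of $[0,1]^2$.

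For~(\ref{eqn:boolean-mn-plan}) I would induct on $n$ exactly as in Fact~\ref{fact:bobkov-induction}. Writing $\bx = (\bx_1, \bx')$, $\by = (\by_1, \by')$ and conditioning on $(\bx_1, \by_1)$, the inductive hypothesis applied to the restrictions $f_{\bx_1}, g_{\by_1} \co \bits^{n-1} \to [0,1]$ bounds the inner expectation by $\Lambda_\rho(F(\bx_1), G(\by_1))$, where $F(x_1) = \E[f_{x_1}]$ and $G(y_1) = \E[g_{y_1}]$; applying the $n = 1$ case to the one-bit functions $F, G$ then closes the induction. Thus everything reduces to the two-point inequality
\begin{equation} \label{eqn:tp-plan}
    \E_{(\bx_1, \by_1)}[\Lambda_\rho(\alpha + \alpha' \bx_1, \beta + \beta' \by_1)] \leq \Lambda_\rho(\alpha, \beta)
\end{equation}
for all $\alpha, \beta \in [0,1]$ and all $\alpha', \beta'$ with $\alpha \pm \alpha', \beta \pm \beta' \in [0,1]$, where $(\bx_1, \by_1)$ is a $\rho$-correlated pair of $\pm 1$ bits.

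The hard step is~(\ref{eqn:tp-plan}), and this is the place where the special structure of $\Lambda_\rho$ enters. Using $\E[\bx_1]=\E[\by_1]=0$, $\E[\bx_1^2]=\E[\by_1^2]=1$, and $\E[\bx_1\by_1]=\rho$, the Taylor expansion of the left-hand side of~(\ref{eqn:tp-plan}) around $(\alpha,\beta)$ reads
\[
\Lambda_\rho(\alpha,\beta) + \tfrac12\bigl((\alpha')^2\,\partial_\alpha^2 \Lambda_\rho + 2\rho\,\alpha'\beta'\,\partial_\alpha\partial_\beta \Lambda_\rho + (\beta')^2\,\partial_\beta^2 \Lambda_\rho\bigr) + O\bigl((|\alpha'|+|\beta'|)^3\bigr),
\]
so infinitesimally I need the matrix with diagonal $(\partial_\alpha^2 \Lambda_\rho, \partial_\beta^2 \Lambda_\rho)$ and off-diagonal $\rho\,\partial_\alpha\partial_\beta \Lambda_\rho$ to be negative semi-definite. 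Writing $\Lambda_\rho(\alpha,\beta) = \Phi_\rho(\Phi^{-1}(\alpha), \Phi^{-1}(\beta))$, where $\Phi_\rho$ denotes the bivariate standard Gaussian CDF with correlation $\rho$, all three partials have explicit closed forms, and using the factorization identity $\vphi(a)\,\vphi\bigl(\tfrac{b-\rho a}{\sqrt{1-\rho^2}}\bigr) = \vphi(b)\,\vphi\bigl(\tfrac{a-\rho b}{\sqrt{1-\rho^2}}\bigr)$ for the joint Gaussian density, one verifies directly that this matrix is singular (determinant identically zero) with non-positive diagonal entries when $\rho \geq 0$. To upgrade from this infinitesimal statement to~(\ref{eqn:tp-plan}) for arbitrary $\alpha', \beta'$, I would use the Barthe--Maurey random walk trick described in Section~\ref{sec:bobkov}: replace $\bx_1$ and $\by_1$ by the stopping values of a $\rho$-correlated pair of lazy walks on $[-1,1]$ with infinitesimal steps $\pm\eps$, tensorize via the same Bobkov-style induction, and let $\eps \to 0^+$ so that the per-step Taylor error of $O(\eps^3)$ summed over $\Theta(1/\eps^2)$ steps vanishes. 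The main obstacle is therefore this Hessian semi-definiteness computation on $\Lambda_\rho$; it is exactly the calculation that pins down why the bivariate-Gaussian CDF is the right function to interpolate against.
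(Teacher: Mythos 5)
Your Hessian computation is indeed the heart of the matter --- the $\rho$-concavity of $\Lambda_\rho$ (singular $H_\rho\Lambda_\rho$ with diagonal entries of sign opposite to $\rho$) is exactly the calculation the De--Mossel--Neeman proof turns on --- but the statement you are trying to tensorize is false, so no amount of random-walk machinery can establish it. The exact Boolean inequality $\E[\Lambda_\rho(f(\bx),g(\by))]\leq\Lambda_\rho(\E[f],\E[g])$ for arbitrary $f,g\co\bn\to[0,1]$ fails already at $n=1$: taking $f(x)=g(x)=\half+\half x$ (i.e.\ $\alpha=\beta=\alpha'=\beta'=\half$ in your two-point inequality), the left side is $\frac14+\frac14\rho$ while the right side is $\half-\frac{1}{2\pi}\arccos\rho$, and the former strictly exceeds the latter for every $0<\rho<1$. (If the exact Boolean inequality were true, it would yield Majority Is Stablest with no small-influence hypothesis, which dictators refute.) So your two-point inequality holds only infinitesimally --- up to a cubic error in $(\alpha',\beta')$ --- and cannot be upgraded to arbitrary increments.

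The Barthe--Maurey stopping-time device does not close this gap. In the one-function Bobkov setting, a single $\pm\delta$ walk stopped at $\pm1$ reproduces a uniform $\pm1$ bit exactly, and the target one-bit inequality is actually true for all $\delta$. Here you would need a pair of $\rho$-correlated walks whose stopped values form a $\rho$-correlated pair of $\pm1$ bits; but the two walks stop at different times, the joint law of the stopped values is not that of a $\rho$-correlated bit pair, and the stopping destroys the coordinatewise-independent product structure on which the Bobkov-style induction relies. The correct repair is the one carried out in Section~\ref{sec:DMN}: keep the cubic Taylor error through the induction, obtaining \eqref{eqn:mn2} with the additive $\sum_i(\|\mathrm{d}_i f\|_3^3+\|\mathrm{d}_i g\|_3^3)$ term, and only then pass to the CLT-simulating functions, for which each martingale difference is $O(1/\sqrt{n})$ and the accumulated error is $O(1/\sqrt{n})\to0$. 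With that change (plus the restriction to range $[\eta,1-\eta]$ with $\eta\to0$, and the trivial induction on the Gaussian dimension $d$), your outline becomes the paper's proof.
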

This is equivalent to Borell's inequality in the same way that~\eqref{eqn:gaussian-bobkov} is equivalent to the Gaussian Isoperimetric Inequality (note in particular that $\Lambda_\rho(\alpha,\beta) = \alpha \beta$ when $\alpha,\beta \in \{0,1\}$).  This inequality also has the property that the general-$d$ case follows from the $d = 1$ case by a completely trivial induction, using no special property of $\Lambda_\rho$ or the Gaussian distribution; it only uses that the~$d$ pairs $(\bz_i, \bz'_i)$ are independent.  In particular, \emph{if}~\eqref{eqn:mn} were to hold for one-bit functions $f, g \co \{-1,1\} \to [0,1]$ then we could deduce it for general $f, g \co \{-1,1\}^n \to [0,1]$ by induction, then for Gaussian $f, g \co \R \to [0,1]$ by the Central Limit Theorem, and finally for Gaussian $f, g \co \R^d \to [0,1]$ by induction again.  Unfortunately, the inequality~\eqref{eqn:mn} does \emph{not} hold for $f, g \co \bits \to [0,1]$.  It's clear that it can't, because otherwise we would obtain the Majority Is Stablest Theorem with no hypothesis about small influences (which is false).  Indeed, the ``dictator'' functions $f, g \co \bits \to [0,1]$, $f(x) = g(x) = \half + \half x$ provide an immediate counterexample; inequality~\eqref{eqn:mn} becomes the false statement $\frac14 + \frac14\rho \leq \frac12 - \frac{1}{2\pi}\arccos \rho$.

Nevertheless, as noted by De, Mossel, and Neeman~\cite{DMN13} we are back in the situation wherein~\eqref{eqn:mn} ``essentially'' holds for one-bit functions ``with small influences''; i.e., for $f(x) = \alpha + \delta_1 x$, $g(x) = \beta + \delta_2 x$ with $\delta_1, \delta_2$ ``small''.  To see this, Taylor-expand the left-hand side of~\eqref{eqn:mn} around~$(\alpha,\beta)$:
\begin{align}                                   \label{eqn:dmn-1}
     \E_{\substack{(\bx, \bx') \\ \textnormal{$\rho$-correlated}}}[\Lambda_{\rho}(f(\bx),g(\bx'))]
    &= \Lambda_\rho(\alpha,\beta) + \E[\delta_1 \bx \cdot \D_1 \Lambda_\rho(\alpha,\beta)] + \E[\delta_2 \bx' \cdot \D_2 \Lambda_\rho(\alpha,\beta)] \nonumber\\
    &+\ \E\left[\begin{bmatrix} \delta_1 \bx & \delta_2 \bx' \end{bmatrix} \cdot H\Lambda_\rho(\alpha,\beta)  \cdot \begin{bmatrix} \delta_1 \bx \\ \delta_2 \bx' \end{bmatrix}\right] + \cdots
\end{align}
%\begin{align}                                   \label{eqn:dmn-1}
%     \phantom{=}\ \E_{\substack{(\bx, \bx') \in \bits^2 \nonumber\\ \textnormal{$\rho$-correlated}}}[\Lambda_{\rho}(f(\bx),g(\bx'))]& \\
%    = \Lambda_\rho(\alpha,\beta) + \E[\delta_1 \bx \cdot \D_1 \Lambda_\rho(\alpha,\beta)] &+ \E[\delta_2 \bx' \cdot \D_2 \Lambda_\rho(\alpha,\beta)] \nonumber\\
%    &+\ \E\left[\begin{bmatrix} \delta_1 \bx & \delta_2 \bx' \end{bmatrix} \cdot H\Lambda_\rho(\alpha,\beta)  \cdot \begin{bmatrix} \delta_1 \bx \\ \delta_2 \bx' \end{bmatrix}\right] + \cdots
%\end{align}
%XXXbad typesetting in matrixXXX
(Here $H \Lambda_\rho$ denotes the Hessian of $\Lambda_\rho$.) The first term here matches the right-hand side of~\eqref{eqn:mn}.  The second and third terms vanish, since $\E[\bx] = \E[\bx'] = 0$.  Finally, since $\E[\bx\bx'] = \rho$ the fourth term is
\begin{equation}                                    \label{eqn:dmn-4}
    \begin{bmatrix} \delta_1 & \delta_2 \end{bmatrix} \cdot H_\rho \Lambda_\rho(\alpha,\beta)  \cdot \begin{bmatrix} \delta_1 \\ \delta_2 \end{bmatrix}, \quad \text{where the notation }     H_\rho F \text{ means } \begin{bmatrix} 1 &  \rho \\ \rho & 1 \end{bmatrix} \circ HF.
\end{equation}
%where we use the following notation (with $\circ$ denoting entrywise product):
%\[
%    H_\rho F = \begin{bmatrix} 1 &  \rho \\ \rho & 1 \end{bmatrix} \circ HF.
%\]
One can show by a relatively short calculation that $\det(H_\rho \Lambda_\rho)$ is identically~$0$ and that the diagonal entries of $H_\rho \Lambda_\rho$ always have opposite sign to~$\rho$.  Thus for $0 < \rho < 1$, the matrix $H_\rho \Lambda_\rho$ is everywhere negative semidefinite and hence~\eqref{eqn:dmn-4} is always nonpositive.  (The reverse happens for $0 < \rho < 1$.) Ledoux~\cite{Led13}
introduced the terminology \emph{$\rho$-concavity of~$F$} for the condition $H_\rho F \preccurlyeq 0$.

It follows that~\eqref{eqn:mn} indeed holds for one-bit Boolean functions $f, g$, up to the ``cubic error term'' elided in~\eqref{eqn:dmn-1}.  If one now does the induction while keeping these cubic error terms around, the result is the following:
\begin{theorem}[``De--Mossel--Neeman Theorem'']             \label{thm:dmn}
    Fix $0 < \rho < 1$ and any small $\eta > 0$.  Then for $f, g \co \bits^n \to [\eta, 1-\eta]$,
    \begin{equation}                                \label{eqn:mn2}
        \E_{\substack{(\bx, \by) \\ \textnormal{$\rho$-correlated}}}[\Lambda_{\rho}(f(\bx),g(\by))] \leq \Lambda_{\rho}(\E[f(\bx)], \E[g(\by)]) + O_{\rho, \eta}(1) \cdot \sum_{i=1}^n  (\|\mathrm{d}_i f\|_3^3 + \|\mathrm{d}_i g\|_3^3),
    \end{equation}
    where $\mathrm{d}_i h$ denotes the \emph{$i$th martingale difference} for~$h$,
    \[
        (\bx_1, \dots, \bx_i) \mapsto \E[h \mid \bx_1, \dots, \bx_i] - \E[h \mid \bx_1, \dots, \bx_{i-1}].
    \]
    (For $-1 < \rho < 0$, the inequality~\eqref{eqn:mn2} is reversed.)
\end{theorem}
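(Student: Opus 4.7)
The plan is to prove Theorem~\ref{thm:dmn} by induction on~$n$, where the crux is a one-bit inequality derived from Taylor expansion around the mean. The argument follows precisely the sketch given just before the theorem, with the additional bookkeeping needed to handle the cubic remainder.

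First I would verify the $n=1$ case. Let $f,g\co\bits\to[\eta,1-\eta]$ and write $f(x)=\alpha+\delta_1 x$, $g(y)=\beta+\delta_2 y$, where $\alpha=\E f$, $\beta=\E g$. One checks immediately that $\mathrm{d}_1 f(\bx)=\delta_1\bx$, so $\|\mathrm{d}_1 f\|_3^3=|\delta_1|^3$ (and likewise for~$g$). Taylor expand $\Lambda_\rho(f(\bx),g(\by))$ around $(\alpha,\beta)$ exactly as in~\eqref{eqn:dmn-1}: the zeroth-order term is $\Lambda_\rho(\alpha,\beta)$; the two linear terms vanish since $\E[\bx]=\E[\by]=0$; the quadratic term is the expression in~\eqref{eqn:dmn-4}, which is nonpositive because $H_\rho\Lambda_\rho\preccurlyeq 0$ throughout $(0,1)^2$ (the $\rho$-concavity fact noted after~\eqref{eqn:dmn-4}); and the cubic (and higher) remainder is bounded in absolute value by $C_{\rho,\eta}(|\delta_1|^3+|\delta_2|^3)$, where $C_{\rho,\eta}$ is the supremum over $[\eta,1-\eta]^2$ of the relevant partial derivatives of~$\Lambda_\rho$ of order~$3$. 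This proves~\eqref{eqn:mn2} for $n=1$.

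Next I would run the induction. Given $f,g\co\bits^n\to[\eta,1-\eta]$, condition on $(\bx_1,\by_1)$ and write $f_{x_1}(x_2,\dots,x_n):=f(x_1,x_2,\dots,x_n)$ and similarly $g_{y_1}$. These restrictions again take values in $[\eta,1-\eta]$, so the inductive hypothesis applied to $f_{\bx_1}$ and $g_{\by_1}$ (conditional on $(\bx_1,\by_1)$) gives
\[
\E_{(\bx_{\geq2},\by_{\geq2})}\!\left[\Lambda_\rho\bigl(f_{\bx_1}(\bx_{\geq2}),g_{\by_1}(\by_{\geq2})\bigr)\right]
\ \leq\ \Lambda_\rho\bigl(F(\bx_1),G(\by_1)\bigr)+C_{\rho,\eta}\!\sum_{i=2}^{n}\bigl(\|\mathrm{d}_{i-1}f_{\bx_1}\|_3^3+\|\mathrm{d}_{i-1}g_{\by_1}\|_3^3\bigr),
\]
where $F(x_1):=\E[f_{x_1}]$, $G(y_1):=\E[g_{y_1}]$. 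Taking expectation over $(\bx_1,\by_1)$, the error terms combine neatly because $\E_{\bx_1}[\|\mathrm{d}_{i-1}f_{\bx_1}\|_3^3]=\|\mathrm{d}_i f\|_3^3$ by the definition of the martingale difference (and similarly for~$g$). For the main term, $F,G\co\bits\to[\eta,1-\eta]$ so the one-bit case applies, yielding $\E[\Lambda_\rho(F(\bx_1),G(\by_1))]\leq\Lambda_\rho(\E f,\E g)+C_{\rho,\eta}(\|\mathrm{d}_1 f\|_3^3+\|\mathrm{d}_1 g\|_3^3)$. Summing the two contributions produces~\eqref{eqn:mn2} for~$n$.

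The main obstacle is genuinely analytic rather than combinatorial: one must actually verify the two properties of $\Lambda_\rho$ on $[\eta,1-\eta]^2$ used above, namely (a) $\rho$-concavity $H_\rho\Lambda_\rho\preccurlyeq 0$, and (b) a uniform bound on third-order partials giving the constant $C_{\rho,\eta}$. Property~(a) is the content of the ``relatively short calculation'' alluded to after~\eqref{eqn:dmn-4}---one shows $\det(H_\rho\Lambda_\rho)\equiv 0$ and that a diagonal entry has sign opposite to~$\rho$---and it is the single nontrivial input. Property~(b) is elementary but the constant really does blow up as $\eta\to 0$, since $\Lambda_\rho$ has singular behaviour near the corners of $[0,1]^2$; this is precisely the reason the hypothesis $f,g\co\bits^n\to[\eta,1-\eta]$ is imposed, and why the cubic error term cannot be dispensed with by naive induction from the constant-free Gaussian statement~\eqref{eqn:mn}.
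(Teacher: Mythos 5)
Your proposal is correct and follows essentially the same route the paper sketches: establish the one-bit case by Taylor expansion around the means, using the $\rho$-concavity of $\Lambda_\rho$ to kill the quadratic term and bounding the cubic remainder by $C_{\rho,\eta}(|\delta_1|^3+|\delta_2|^3)$, then run the standard restriction induction (as in the hypercontractivity proof) while letting the cubic errors accumulate via $\E_{\bx_1}\|\mathrm{d}_{i-1}f_{\bx_1}\|_3^3 = \|\mathrm{d}_i f\|_3^3$. The bookkeeping identities you use (in particular $\mathrm{d}_1 F = \mathrm{d}_1 f$ for $F(x_1)=\E[f_{x_1}]$) are exactly what makes the induction close, and your identification of the two analytic inputs --- the $\rho$-concavity calculation and the third-derivative bound that degenerates as $\eta\to 0$ --- matches the paper's account.
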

With this theorem in hand, Borell's Isoperimetric Inequality for Gaussian functions $f, g \co \R \to [\eta,1-\eta]$ is easily deduced by the standard Central Limit Theorem argument: one only needs to check that the cubic error term is~$O(\frac{1}{\sqrt{n}})$, and~$n$ may be taken arbitrarily large.  Then one immediately deduces the full Borell theorem by taking $\eta \to 0$ and doing another induction on the Gaussian dimension~$d$.  On top of this, De, Mossel, and Neeman showed how to deduce Majority Is Stablest from Theorem~\ref{thm:dmn}, using a small collection of analytical tricks appearing in the original proof.  The key trick is to use hypercontractivity to bound $\|\mathrm{d}_i f\|_3^3$ in terms of
\[
    (\|\D_i f\|_2^2)^{1+\delta} = \Inf_i[f]^{1+\delta}
\]
for some small $\delta \approx \frac{\log \log(1/\eps)}{\log(1/\eps)} > 0$. The fact that we get the nontrivial extra factor $\Inf_i[f]^{\delta}$, which is at most $\eps^{\delta} \approx \frac{1}{\log(1/\eps)}$ by assumption, is the key to finishing the proof.

\section{Conclusions: proof complexity}                               \label{sec:conclusion}

As mentioned, there are two known proofs of the Majority Is Stablest Theorem: the original one, which used the Invariance Principle to reduce the problem to Borell's Isoperimetric Inequality; and, the elegant one due to De, Mossel, and Neeman, which is a completely ``discrete proof'', as befits a purely discrete problem like Majority Is Stablest.  Esthetics is not the only merit of the latter proof, however; as we describe in this section, the fact that the De--Mossel--Neeman proof is simpler and more discrete leads to new technical results concerning the computational complexity of Max-Cut.

Regarding Max-Cut, let's consider the closely related problem of \emph{certifying} that  a given graph has no large cut.  As we saw in Section~\ref{sec:max-cut}, for any graph~$G$ we can use semidefinite programming to efficiently compute a value $\beta = \SDPOpt(G)$ such that the maximum cut in $G$ satisfies $\Opt(G) \leq \beta$.  We think of this algorithm as producing a \emph{proof} of the statement ``$\Opt(G) \leq \beta$''.  Furthermore, the (analysis of the) Goemans--Williamson algorithm implies that the bound found by this algorithm is fairly good; whenever $G$ truly satisfies $\Opt(G) \leq \frac{\theta}{\pi}$ (for $\theta \in [\thetagw, \pi]$), we will efficiently obtain a proof of ``$\Opt(G) \leq \half - \half \cos \theta$''.  For example, if $\Opt(G) \leq \frac34$ then there is an efficiently-obtainable ``SDP proof'' of the statement ``$\Opt(G) \leq \half + \frac{1}{2\sqrt{2}} \approx .854$''.

Assuming the Unique Games Conjecture (and $\PTIME \neq \NP$), the works~\cite{KKMO04,MOO05} imply that there is no efficient algorithm that can in general find better proofs; e.g., that can certify ``$\Opt(G) \leq .853$'' whenever $\Opt(G) \leq \frac34$.  In fact, under the additional standard assumption of $\coNP \neq \NP$, the implication is simply that no short proofs \emph{exist}; i.e., there are infinite families of graphs $G = (V,E)$ with $\Opt(G) \leq \frac34$ but no $\poly(|V|)$-length proof of the statement ``$\Opt(G) \leq .853$'' (say, in some textbook formalization of mathematical reasoning).  In other words:

\paragraph{Unique Games \& $\PTIME \neq \NP$ Prediction about Max-Cut:} Let $\theta \in [\thetagw, \pi]$ and $\delta > 0$.  There is no polynomial-time algorithm that, given a Max-Cut instance~$G$ with $\Opt(G) \leq \frac{\theta}{\pi}$, outputs a proof of ``$\Opt(G) \leq \half - \half \cos \theta - \delta$''.

\paragraph{Unique Games \& $\coNP \neq \NP$ Prediction about Max-Cut:} In fact, there are infinitely many graphs~$G$ with $\Opt(G) \leq \frac{\theta}{\pi}$, yet for which no polynomial-length proof of ``$\Opt(G) \leq \half - \half \cos \theta - \delta$'' exists.\\

As mentioned, the Unique Games Conjecture is quite contentious, so it's important to seek additional evidence concerning the above predictions.  For example, to support the first prediction one should at a minimum show that the semidefinite program~\eqref{eqn:gw-sdp} fails to provide such proofs.  That is, one should find graphs~$G$ with $\Opt(G) \leq \frac{\theta}{\pi}$ yet $\SDPOpt(G) \geq \half - \half \cos \theta$.  Such graphs are called \emph{SDP integrality gap instances}, as they exhibit a large gap between their true optimal Max-Cut and the upper-bound certified by the SDP.  Borell's Isoperimetric Inequality precisely provides such graphs, at least if ``weighted continuous graphs'' are allowed: One takes the ``graph'' $G$ whose vertex set is~$\R^d$ and whose ``edge measure'' is given by choosing a $(\cos \theta)$-correlated pair of Gaussians.  The fact that $\Opt(G) \leq \frac{\theta}{\pi}$ is immediate from Borell's Theorem~\ref{thm:gaussian-mist}; further, it's not hard to show (using the idea of Remark~\ref{rem:sphere-idea}) that choosing $\vec{U}(v) = v/\sqrt{d}$ in~\eqref{eqn:gw-sdp} establishes $\SDPOpt(G) \geq \half - \half \cos \theta - o_d(1)$.  These facts were essentially established originally by Feige and Schechtman~\cite{FS02}, who also showed how to discretize the construction so as to provide finite integrality gap graphs.

(Incidentally, we may now explain that the Raghavendra Theory mentioned at the end of Section~\ref{sec:max-cut} significantly generalizes the work of Khot et al.~\cite{KKMO04} by showing how to transform an SDP integrality gap instance for \emph{any} CSP into a matching computational hardness-of-approximation result, assuming the Unique Games  Conjecture.)

Although the semidefinite program~\eqref{eqn:gw-sdp} fails to certify $\Opt(G) \leq \frac{\theta}{\pi}$ for the ``correlated Gaussian graphs'' described above, a great deal of recent research  has gone into developing stronger ``proof systems'' for reasoning about Max-Cut and other CSPs.  (See, e.g.,~\cite{Geo10} for a survey.) Actually, until recently this research was viewed not in terms of proof complexity but in terms of analyzing ``tighter'' SDP relaxations that can still be solved efficiently.  For example, one can still solve the optimization problem~\eqref{eqn:gw-sdp} in polynomial time with the following ``triangle inequality'' constraint added in:
\[
    \la U(v), U(w) \ra + \la U(w), U(x) \ra - \la U(v), U(x)\ra \leq 1 \quad \forall v,w,x \in V.
\]
Note that with this additional constraint we still have $\Opt(G) \leq \SDPOpt(G)$ for all~$G$, because the constraint is satisfied by any genuine bipartition $U \co V \to \bits$. As noted by Feige and Schechtman~\cite{FS02}, adding this constraint gives a certification better than ``$\Opt(G) \leq \half - \half \cos \theta$'' for the Gaussian correlation graphs, though it's not clear by how much.

Although this stronger ``SDP + triangle inequality'' proof system does better on Gaussian correlation graphs, a breakthrough work of Khot and Vishnoi~\cite{KV05} showed that it still suffers from the same integrality gap for a different infinite family of graphs.  In other words, even when the SDP includes the triangle inequalities, these \emph{Khot--Vishnoi graphs}~$G = (V,E)$ have $\SDPOpt(G) \geq \half - \half \cos \theta$ yet $\Opt(G) \leq \frac{\theta}{\pi} + o_{|V|}(1)$.  The second fact, the upper bound on the true Max-Cut value, relies directly on the Majority Is Stablest Theorem.  Subsequent works~\cite{KS09,RS09b} significantly generalized this result by showing that even much tighter ``SDP hierarchies'' still fail to certify anything better than ``$\Opt(G) \leq \half - \half \cos \theta$'' for the Khot--Vishnoi graphs~$G$.  This could be considered additional evidence in favor of the Unique Games \& $\PTIME \neq \NP$ Prediction concerning Max-Cut.

A recent work by Barak et al.~\cite{BBH+12} cast some doubt on this prediction, however.  Their work showed that the especially strong ``Lasserre/Parrilo SDP hierarchy''~\cite{Sho87,Las00,Par00} succeeds in finding some good CSP bounds which weaker SDP hierarchies are unable to obtain.  Specifically, they showed it provides good upper bounds on the optimal value of the Khot--Vishnoi ``Unique Games instances'' (which are, in some sense, subcomponents of the Khot--Vishnoi Max-Cut graphs).  Subsequent work of O'Donnell and Zhou~\cite{OZ13} further emphasized the equivalence of the Lasserre/Parrilo SDP hierarchy and the \emph{Sum-of-Squares (SOS) proof system}, invented by Grigroriev and Vorobjov~\cite{GV01}. In the context of the Max-Cut CSP, this proof system (inspired by Hilbert's~17th Problem~\cite{Hil02} and the \emph{Positivstellensatz} of Krivine~\cite{Kri64} and Stengle~\cite{Ste73}) seeks to establish the statement ``$\Opt(G) \leq \beta$'' for a graph $G = (V,E)$ by expressing
\begin{equation}                                    \label{eqn:SOS}
    \beta - \left(\avg_{(v,w) \in E} \half - \half X_v X_w\right) = \sum_{i=1}^s P_i^2 \quad \text{within the ring } \R[(X_v)_{v \in V}]/(X_v^2 - 1)_{v \in V},
\end{equation}
for some formal polynomials $P_1, \dots, P_s$ of degree at most some constant~$C$.  Somewhat remarkably, there is an efficient ($|V|^{O(C)}$-time) algorithm for finding such $P_i$'s whenever they exist.

As mentioned, for the Khot--Vishnoi Max-Cut graphs~$G$, the fact that $\Opt(G) \leq \frac{\theta}{\pi} + o(1)$ follows directly from the Majority Is Stablest Theorem.  To show that the SOS proof system can also certify this fact (thereby casting some doubt on the Unique Games \& $\PTIME \neq \NP$ Prediction about Max-Cut), one needs to show that not only is the Majority Is Stablest Theorem true, but that it can be proved within the extremely constrained  SOS proof system, \'a~la~\eqref{eqn:SOS}.  The original proof of the Majority Is Stablest Theorem was quite complicated, using the Invariance Principle from~\cite{MOO10} to reduce Borell's Isoperimetric Inequality, and then relying on the known geometric proofs~\cite{Bor85,Bec92} of the latter.  The prospect for converting this proof into an SOS format seemed quite daunting (although a partial result was established in~\cite{OZ13}, showing that the SOS proof system can establish ``$\Opt(G) \leq \half - \frac{\cos\theta}{\pi} - (\half - \frac{1}{\pi})\cos^3 \theta$'').  However, the simplicity and discrete nature of the new De--Mossel--Neeman proof of the Majority Is Stablest Theorem allowed them to show that the SOS proof system \emph{can} establish the truth about the Khot--Vishnoi graphs, $\Opt(G) \leq \half - \half \cos \theta + o(1)$.

It is to be hoped that this result can be extended to the entire Raghavendra Theory, thereby showing that the SOS proof system can certify the optimal value of the analogue of the Khot--Vishnoi instances for \emph{all} CSPs.  However as the Raghavendra Theory still relies on the Invariance Principle, whether or not this is possible is unclear.

Finally, in light of the De--Mossel-Neeman result, the following interesting question is open:  Are there (infinite families of) instances of the Max-Cut problem~$G$ such that $\Opt(G) \leq \frac{\theta}{\pi}$, yet such that any mathematical proof of this statement is so complicated that the SOS proof system cannot establish anything better than ``$\Opt(G) \leq \half - \half \cos \theta$''?  If such graphs were found, this might tilt the weight of evidence back in favor of the Unique Games \& $\PTIME \neq \NP$ Prediction. Of course, if human mathematicians explicitly construct the proof of $\Opt(G) \leq \frac{\theta}{\pi}$, presumably it will have polynomial length, and therefore not provide any evidence in favor of the Unique Games \& $\coNP \neq \NP$ Prediction.  To provide evidence for this stronger prediction, one presumably needs to give a \emph{probabilistic} construction of graphs~$G$ such that both of the following happen with high probability: (i)~$\Opt(G) \leq \frac{\theta}{\pi}$; and, (ii)~there is no polynomial-length proof even of ``$\Opt(G) \leq \half - \half \cos \theta$''.

\frenchspacing

%%needs to be fixed...
%\bibliographystyle{abbrv}
%\bibliography{odonnell-bib}

\vspace{-.04in}

\end{document}